\definecolor{mygray}{rgb}{0.75,0.75,0.75}
\theoremstyle{plain}
\newtheorem{thm}{Theorem}[section]
\newtheorem{cor}[thm]{Corollary}
\newtheorem{lem}[thm]{Lemma}
\newtheorem{prop}[thm]{Proposition}
\newtheorem{remark}[thm]{Remark}
\newtheorem{conj}[thm]{Conjecture}
\theoremstyle{definition}
\newtheorem{defn}[thm]{Definition}
\newtheorem{exam}[thm]{Example}
\theoremstyle{remark}
\numberwithin{equation}{section}
\newcommand{\beast}{\begin{eqnarray*}}
\newcommand{\eeast}{\end{eqnarray*}}
\title{A construction of group divisible designs with zero block sum}
\author{
Chong-Dao Lee\footnote{Department of Communication Engineering, I-Shou University, Kaohsiung, Taiwan}
\and
Yaotsu Chang\footnote{Department of Financial and Computational Mathematics, I-Shou University, Kaohsiung, Taiwan}
\and
Chia-an Liu\footnote{Corresponding author. E-mail address: liuchiaan8@gmail.com (C.-a. Liu).}
~\footnote{Department of Financial and Computational Mathematics, I-Shou University, Kaohsiung, Taiwan}}
\date{December 30, 2016}
\begin{document}
\maketitle

\bibliographystyle{plain}

\bigskip

\begin{abstract}
This paper gives a construction of group divisible designs on the binary extension fields with block sizes $3,$ $4,$ $5,$ $6,$ and $7,$ respectively,
which is motivated from the decoding of binary quadratic residue codes.
A conjecture is proposed for this construction of group divisible designs with larger block sizes.

\bigskip

{\noindent\bf Keywords:}
Group divisible design (GDD), finite field, quadratic residue code.\\
{\noindent\bf 2010 MSC:}
05B05, 51E05.
\end{abstract}

\section{Introduction}   \label{sec_intro}
Assmus and Mattson in 1969 \cite{am:69} first proposed balanced incomplete block designs (BIBDs) via the theory of error-correcting codes. The codewords of any fixed weight in an extended quadratic residue code \cite{am:69} (respectively, a Reed-Muller code \cite{cd:07}, an extremal binary doubly-even self-dual code \cite{cd:07}, and a Pless symmetry code \cite{p:72}) form a $2$-design (respectively, $3$-design, $5$-design, and $5$-design). The minimum weight codewords in a linear perfect code \cite{am:74} with minimum distance $d=2e+1$ support an $(e+1)$-design. It was shown in \cite{kp:94} that the codewords of any fixed weight in two codes, an extremal binary even formally self-dual code and its dual code, forms a $3$-design. For more $t$-designs supported by other error-correcting codes, the reader is referred to \cite{bg:04}. From the above results, the codewords of error-correcting codes play a significant role in constructing BIBDs.
In the theoretical aspect, the study on $t$-designs over finite fields~\cite{bkow:14,flv:14} also gets some attention.

\medskip

The group divisible design (GDD) is a topic generalized from the pairwise balanced design (well-known as PBD) \cite[Definition~1.4.1]{a:90}. Since GDD has been widely applied to graphs \cite{fr:98} and matrices \cite{ss:98}, many authors proposed different constructions of a GDD. One can see \cite{fr:98,ss:98,hs:04}, \cite[Definition~1.4.2]{a:90}, \cite[Definition~7.14]{s:04} and \cite[Definition~5.5]{w:09} for some examples. Recently, GDDs have been used in the constructions of optical orthogonal codes \cite{wy:10} \cite{wc:15}, constant-weight codes \cite{clls:07} \cite{cgl:08}, and constant-composition codes \cite{cl:07}. However, there are very few studies focused on GDDs constructed from error-correcting codes.

\medskip

In 2003, Chang \emph{et al.} \cite{ctrcl:03} developed the new decoders for three binary quadratic residue codes with irreducible polynomials.
Motivated by the decoding of binary quadratic residue codes, this study considers the problem of constructing GDD. A \emph{group divisible design} $\text{GDD}(v,n,k)$ is a triple ($X,\mathcal{G},\mathcal{B}$), where $\mathcal{G}$ is a collection of $n$-subsets of $v$-set $X$ and $\mathcal{B}$ is a collection of $k$-subsets of $X.$ In this paper, we assume $X={\Bbb F}_{2^m}\setminus \{0,1\}$ and consider the correctable error patterns $(x_1,~x_2,\ldots,x_k)$ with a fixed weight $k$ and satisfying $\alpha^{x_1}+\alpha^{x_2}+\cdots+\alpha^{x_k}=1$ in the finite field $\Bbb F_{2^m}$, where distinct integers $1\leq x_i\leq 2^m-1$ for $1\leq i\leq k\leq m$ and $\alpha$ is a primitive element of $\Bbb F_{2^m}$. If $k=2$, then those error patterns form a group set $\mathcal G$. Similarly, for each $3\leq k\leq m,$ these error patterns support a block set $\mathcal{B}$. This paper gives a construction of group divisible designs with block sizes $3,$ $4,$ $5,$ $6,$ and $7,$ respectively. The correctness and parameters of the construction are obtained by using the inclusion-exclusion principle.

\medskip

\medskip

The paper is organized as follows. Preliminary notations are introduced in Section~\ref{sec_pre}. The details of our construction of GDDs are proposed in Section~\ref{sec_GDD}. Section~\ref{sec_con} summarizes the results obtained from Section~\ref{sec_GDD} and presents a conjecture for group divisible designs with larger block sizes.


\section{Preliminary}   \label{sec_pre}

Basic results of the group divisible design and finite field are provided in this section for later used. The notations and definitions of a GDD can be referred to \cite[Definition~1.4.2]{a:90}.
\begin{defn}\label{defn_GDD}
A \emph{group divisible design} $\text{GDD}(v,n,k)$ is a triple ($X,\mathcal{G},\mathcal{B}$), where $\mathcal{G}$ is a collection of $n$-subsets of $v$-set $X$ and $\mathcal{B}$ is a collection of $k$-subsets of $X.$ We say that $\mathcal{G}$ is the \emph{group set} and each element in $\mathcal{G}$ is a \emph{group}, and $\mathcal{B}$ is the \emph{block set} and each element in $\mathcal{B}$ is a \emph{block}, such that:
\begin{enumerate}
\item[(i)] $\mathcal{G}$ forms a partition of $X,$
\item[(ii)] for all $B\in \mathcal{B}$ and $u,v\in B$ there does not exist $G\in\mathcal{G}$ such that ${u,v}\in G,$ and
\item[(iii)] every pair of distinct elements $x$ and $y$ from different groups occur together in exactly $\lambda$ blocks.
\end{enumerate}
In particular, the condition (iii) is called the \emph{balance condition}, and $\lambda$ is called the \emph{balance parameter} of ($X,\mathcal{G},\mathcal{B}$).
\end{defn}

\medskip

Let $(X,\mathcal{G},\mathcal{B})$ be a GDD and $r_x$ denote the number of blocks in $\mathcal{B}$ that contain $x$ for each $x\in X.$ The following result given in~\cite[Proposition~2.2]{t:15} tells that $r_x$ is independent of the choice of $x$ which is called the \emph{repetition number} of $(X,\mathcal{G},\mathcal{B}).$
\begin{prop}    \label{prop_r_const}
Let ($X,\mathcal{G},\mathcal{B}$) be a $\text{GDD}(v,n,k)$ with balance parameter $\lambda.$ Then each element in $X$ occurs in
\begin{equation}    \label{eq_r}
r=\frac{\lambda(v-n)}{k-1}
\end{equation}
blocks.
\end{prop}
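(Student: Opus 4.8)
The plan is to fix an arbitrary element $x\in X$ and to count, in two different ways, the number of incidences $(y,B)$ in which $y$ lies in a group different from that of $x$ and $B$ is a block containing both $x$ and $y$. Since $\mathcal{G}$ partitions $X$ by condition (i), the element $x$ belongs to a unique group $G$, and $G$ has exactly $n$ elements; hence precisely $v-n$ elements of $X$ lie in groups different from $G$. Writing $r_x$ for the number of blocks containing $x$, the goal is to show $r_x(k-1)=\lambda(v-n)$, which immediately yields both the formula and the independence of $r_x$ on $x$.

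For the first count I would let $y$ range over the $v-n$ elements lying in groups other than $G$. By the balance condition (iii), each such pair $\{x,y\}$ of elements from different groups occurs together in exactly $\lambda$ blocks. Summing over all admissible $y$ therefore produces $\lambda(v-n)$ incidences.

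For the second count I would instead let $B$ range over the $r_x$ blocks that contain $x$. Here the decisive step is to invoke condition (ii): within any single block no two elements share a common group, so each of the $k-1$ elements of $B$ other than $x$ automatically lies in a group different from $G$. Consequently every block through $x$ contributes exactly $k-1$ incidences, and the total equals $r_x(k-1)$.

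Equating the two counts gives $r_x(k-1)=\lambda(v-n)$, whence $r_x=\lambda(v-n)/(k-1)$. Because the right-hand side is independent of $x$, the quantity $r_x$ takes the same value for every element of $X$, and denoting this common value by $r$ delivers formula~\eqref{eq_r}. I expect the only genuinely substantive point to be the use of condition (ii) in the second count: it is what guarantees that each block containing $x$ supplies the full complement of $k-1$ partners from foreign groups rather than fewer, and so it is exactly the hypothesis that makes the double count collapse to a clean identity.
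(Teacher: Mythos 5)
Your proof is correct. Note that the paper itself gives no proof of this proposition: it is quoted from an external source (Proposition~2.2 of the cited thesis \cite{t:15}), so there is no internal argument to compare against. Your double count of incidences $(y,B)$ --- with $y$ outside the group of $x$ and $B$ a block through both --- is the standard argument for this fact, and it is complete: condition~(i) gives the count $v-n$ of admissible partners, condition~(iii) gives $\lambda(v-n)$ incidences from the first count, and condition~(ii) is correctly identified as the step guaranteeing that every block through $x$ contributes exactly $k-1$ incidences, so that $r_x(k-1)=\lambda(v-n)$ and $r_x$ is independent of $x$.
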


\medskip

Let $r=r_x$ be the repetition number of $(X,\mathcal{G},\mathcal{B}).$ Since each block in $\mathcal{B}$ is of cardinality $k,$ one can get the number of blocks in $\mathcal B$, denoted by $b=|\mathcal{B}|$, by direct counting method.
\begin{prop}    \label{prop_num_of_block}
Let ($X,\mathcal{G},\mathcal{B}$) be a $\text{GDD}(v,n,k)$ with balance parameter $\lambda$ and repetition number $r.$ Then, the number of blocks in $\mathcal B$ is
\begin{equation}    \label{eq_b}
b=|\mathcal{B}|=\frac{vr}{k}.
\end{equation}
\qed
\end{prop}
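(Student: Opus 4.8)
The plan is to prove the identity by a double count of the incidences between elements of $X$ and blocks of $\mathcal{B}$. Concretely, I would count the cardinality of the set
\[
I = \{(x, B) : x \in X,\ B \in \mathcal{B},\ x \in B\}
\]
in two different ways and equate the results. This is the standard incidence-counting technique, and the only external fact it relies on is that the repetition number $r_x$ is independent of $x$, which is already supplied by Proposition~\ref{prop_r_const}.

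First I would compute $|I|$ by grouping the pairs according to their first coordinate. For each fixed $x \in X$, the number of blocks $B \in \mathcal{B}$ with $x \in B$ is by definition $r_x$, and by Proposition~\ref{prop_r_const} this common value is $r$ for every element. Since $|X| = v$, summing over all $x \in X$ gives $|I| = vr$. Next I would compute $|I|$ by grouping instead according to the second coordinate: for each fixed $B \in \mathcal{B}$, the number of elements $x$ lying in $B$ is $|B| = k$, because every block is a $k$-subset of $X$ by Definition~\ref{defn_GDD}. Summing over the $b = |\mathcal{B}|$ blocks then gives $|I| = bk$.

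Equating the two expressions yields $vr = bk$, and dividing by $k$ produces the claimed formula $b = vr/k$. I do not expect any genuine obstacle here, as the argument is a clean two-way count; the single point that must be invoked rather than verified on the spot is the constancy of $r_x$, and that has been established beforehand in Proposition~\ref{prop_r_const}.
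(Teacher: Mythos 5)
Your proof is correct and is precisely the "direct counting method" the paper invokes (the paper states the result with the argument left implicit): you double-count the incidence pairs $(x,B)$ with $x\in B$, getting $vr$ by summing over elements via Proposition~\ref{prop_r_const} and $bk$ by summing over blocks, whence $b=vr/k$. Nothing further is needed.
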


\medskip

The finite field properties in the following are referred to \cite[Sec 4.2]{p:98}.
\begin{itemize}
\item [(i)] Every finite field has $p^{m}$ elements for some prime $p$ and positive integer $m.$
\item [(ii)] For any positive integer $m,$ there is a unique field (up to isomorphism) of $2^m$ elements. We denote this field by $\mathbb{F}_{2^m}.$
\item[(iii)] The multiplicative group $\mathbb{F}_{2^m}\setminus\{0\}$ is cyclic so that there exists a generator (which is so-called a \emph{primitive element}) of $\mathbb{F}_{2^m}\setminus\{0\}.$
\end{itemize}
Finite field is an important topic in Abstract Algebra. See \cite[Chapter~6]{w:12} for more details.

\medskip

Throughout this paper, one considers $X=\mathbb{F}_{2^m}\setminus\{0,1\}$, where $0$ and $1,$ respectively, denote the zero and unity elements in the finite field $\mathbb{F}_{2^m}$ of order $2^m$ for arbitrary positive integer $m\geq 3.$ Note that the cardinality of $X$ is $|X|=2^{m}-2.$


\section{A construction of group divisible designs}   \label{sec_GDD}

The aim of this section is to propose a construction of group divisible designs with block sizes $3,4,5,6,$ and $7$. The collections $W_k$ of $k$-subsets of $X$ for $k \geq 2$ are given in the following. It will be verified that $W_2$ forms a partition of $X,$ and for each $3\leq k \leq 7$ a GDD with block size $k$ is constructed from $W_k.$
\begin{defn}    \label{defn_Wk}
For each positive integers $k \geq 2,$ let
$$W_k=\{B\subset X~\mid~|B|=k, \sum\limits_{i \in B}i=1,~\text{and}~{B \choose \ell}\cap W_\ell=\phi~~~\text{for all~} 2\leq\ell\leq k-3\}.$$
\end{defn}

\medskip

The next observations are directly from Definition~\ref{defn_Wk}.
\begin{remark}      \label{rem_Wk}
The condition ${B \choose \ell}\cap W_\ell=\phi~\text{for all~}2\leq\ell\leq k-3$ in Definition~\ref{defn_Wk} can be realized as $\text{for all~} 2\leq\ell\leq k-1.$ From the condition $|B|=k,$ if $x \in {B \choose k-1}\cap W_{k-1},$ then the only element in $B\setminus \{x\}$ is $0.$ However, $0 \not\in B \subset X,$ which is a contradiction. If $y \in {B \choose k-2}\cap W_{k-2},$ then the sum of $B \setminus \{x\}$ is $0$ so that the two elements in $B \setminus \{x\}$ are equal, which also contradicts to $|B|=k.$
\end{remark}

\medskip

An example of $W_2$ and $W_3$ is illustrated.
\begin{exam}    \label{exam_m3}
Let $m=3$. Let $\gamma=x$ be a primitive element of the finite field $\mathbb{F}_{2^3}\cong \mathbb{F}_2[x]/\langle x^3+x+1\rangle$.  Then, one has $\gamma^2=x^2,$ $\gamma^3=x+1,$ $\gamma^4=x^2+x,$ $\gamma^5=x^2+x+1,$ and $\gamma^6=x^2+1.$ Let $X=\{\gamma^i\mid i=1,2,\ldots,6\}.$ From Definition \ref{defn_Wk}, the collection $W_2$ of $2$-subsets of $X$ is
\begin{equation}
W_2=\{\{\gamma,\gamma^3\},\{\gamma^2,\gamma^6\},\{\gamma^4,\gamma^5\}\}
\nonumber
\end{equation}
which forms a partition of $X,$ and the collection $W_3$ of $3$-subsets of $X$ is
\begin{equation}
W_3=\{\{\gamma,\gamma^2,\gamma^5\},\{\gamma,\gamma^6,\gamma^4\}\},
\{\gamma^3,\gamma^2,\gamma^4\},\{\gamma^3,\gamma^6,\gamma^5\}\},
\nonumber
\end{equation}
where each block $B\in W_3$ is with cardinality $|B|=3$ and block sum $\sum_{i\in B}i=1$ in $\mathbb{F}_{2^3}.$
\end{exam}

\medskip

Two results are shown below that the collection $W_2$ forms a partition of $X$ and each block in $W_2$ is not a subset of $W_k$ for $k \geq 3,$ so $W_2$ forms a group set for constructing GDD with respect to $X.$
\begin{lem}     \label{lem_W2}
The blocks set $W_2$ forms a partition of $X$ and the number of blocks in $W_2$ is $\frac{2^m-2}{2}.$
\end{lem}
\begin{proof}
For each $a \in X,$ $0,1 \not\in X$ implies $a \not\in\{0,1\},$ so $a+1:=b \not\in\{0,1\}$ either. Hence, $b\in X.$ Besides, $a\neq b$ since $a+b=1\neq 0.$ Therefore, $\{a,b\}\in W_2$ and $W_2$ forms a partition of $X.$ Then, the number of blocks in $W_2$ is counted by
$$|W_2|=\frac{|X|}{2}=\frac{|\mathbb{F}_{2^m}\setminus\{0,1\}|}{2}=\frac{2^m-2}{2}.$$
\end{proof}

\medskip

\begin{lem} \label{lem_W2_group}
For each $k \geq 3,$ a block in $W_2$ is not a subset of any block in $W_k.$
\end{lem}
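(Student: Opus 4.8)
The plan is to argue by contradiction, reducing the whole statement to the single case $\ell=2$ of the defining condition of $W_k$. First I would recall from Lemma~\ref{lem_W2} that every block of $W_2$ is a $2$-subset $\{a,b\}\subset X$ with $a+b=1$. Suppose, toward a contradiction, that such a block $\{a,b\}$ were a subset of some $B\in W_k$ with $k\geq 3$. Then $\{a,b\}$ is simultaneously a $2$-subset of $B$ and a member of $W_2$, so that $\binom{B}{2}\cap W_2\neq\phi$.

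The heart of the matter is that this directly contradicts the defining property of $W_k$. By Remark~\ref{rem_Wk}, membership $B\in W_k$ forces $\binom{B}{\ell}\cap W_\ell=\phi$ for every $2\leq\ell\leq k-1$, and since $k\geq 3$ the index $\ell=2$ always lies in this range. Hence $\binom{B}{2}\cap W_2=\phi$, contradicting the previous observation and completing the argument.

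The only delicate point — and the step I expect to be the main obstacle — is ensuring that the $\ell=2$ condition is genuinely available for every $k\geq 3$, since it is not literally part of Definition~\ref{defn_Wk} for small $k$. For $k\geq 5$ it is built into the definition, because then $2\leq k-3$. For $k=3$ and $k=4$, however, the range $2\leq\ell\leq k-3$ is empty, so the constraint $\binom{B}{2}\cap W_2=\phi$ must instead be supplied by Remark~\ref{rem_Wk}: when $k=3$ the relevant index is $\ell=k-1=2$, and when $k=4$ it is $\ell=k-2=2$, precisely the two cases the remark disposes of using $0\notin X$ (which rules out a zero complement) and the distinctness of the elements of $B$ (which in characteristic two rules out two equal complementary entries). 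Thus the argument hinges not on any computation but on this bookkeeping; once the $\ell=2$ condition is secured for all $k\geq 3$, the contradiction is immediate.
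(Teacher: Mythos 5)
Your proof is correct, and it in fact supplies an argument where the paper gives essentially none. The paper's entire proof is the sentence ``The result immediately follows from Definition~\ref{defn_GDD}'' --- an appeal to the \emph{GDD} definition, which as stated is circular: condition (ii) of Definition~\ref{defn_GDD} is precisely the property that Lemma~\ref{lem_W2_group} is meant to establish for the triple $(X,W_2,W_k)$, so it cannot be cited to prove it (most likely the authors intended to point to Definition~\ref{defn_Wk} instead). Your argument provides the content this citation skips: a block of $W_2$ contained in some $B\in W_k$ would give $\binom{B}{2}\cap W_2\neq\phi$, while Definition~\ref{defn_Wk} together with Remark~\ref{rem_Wk} forces $\binom{B}{2}\cap W_2=\phi$ for every $k\geq 3$. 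Your bookkeeping is also exactly right, and it is the one place where care is genuinely needed: for $k\geq 5$ the index $\ell=2$ lies in the range $2\leq\ell\leq k-3$ of Definition~\ref{defn_Wk} itself, whereas for $k=3$ and $k=4$ that range is empty, and the exclusion instead comes from Remark~\ref{rem_Wk}'s handling of $\ell=k-1$ (the complementary element would be $0\notin X$) and $\ell=k-2$ (the two complementary elements would be equal, contradicting $|B|=k$), respectively. In short, the paper's one-liner buys brevity at the cost of rigor and a wrong cross-reference; your version is the verification that Remark~\ref{rem_Wk} was evidently set up to enable, and it is the argument that should be there.
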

\begin{proof}
The result immediately follows from Definition~\ref{defn_GDD}.
\end{proof}

It should be noticed that Lemmas~\ref{lem_W2} and~\ref{lem_W2_group} give the conditions in Definition~\ref{defn_GDD}~(i) and (ii), respectively, for the triple $(X,W_2,W_k).$ Next, in order to prove that the triple $(X,W_2,W_k)$ is a $\text{GDD}(2^m-2,2,k)$ for positive integers $m\geq k\geq 3,$ it is sufficient to find a balance parameter $\lambda_k.$

\medskip

First of all, a group divisible design with block size $k=3$ is presented.
\begin{thm}     \label{thm_W3}
For $m \geq 3,$ the triple $(X,W_2,W_3)$ is a $\text{GDD}(2^m-2,2,3)$ with balance parameter
$$\lambda_3=1.$$
\end{thm}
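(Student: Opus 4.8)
The plan is to establish the balance condition directly, showing that every pair of distinct elements from different groups of $W_2$ lies in exactly one block of $W_3$, i.e.\ $\lambda_3=1$. Since Lemmas~\ref{lem_W2} and~\ref{lem_W2_group} already supply conditions (i) and (ii) of Definition~\ref{defn_GDD}, it remains only to verify (iii) with $\lambda=1$. First I would fix an arbitrary pair $\{a,b\}\subset X$ with $a\neq b$ and with $a,b$ lying in \emph{different} groups of $W_2$; the latter means precisely that $a+b\neq 1$ (since the group containing $a$ is $\{a,a+1\}$ by Lemma~\ref{lem_W2}). The goal is to count the blocks $B\in W_3$ with $\{a,b\}\subset B$.

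The key observation is that a $3$-subset $B=\{a,b,c\}$ of $X$ satisfies the block-sum condition $\sum_{i\in B}i=1$ in $\mathbb{F}_{2^m}$ exactly when $a+b+c=1$, which determines the third element uniquely as $c=1+a+b$ (using characteristic $2$, so that $-1=1$ and addition is its own inverse). Thus there is at most one candidate third element. The proof then reduces to checking that this $c$ is legitimate: I would verify that $c\in X$ (that is, $c\neq 0$ and $c\neq 1$) and that $c$ is distinct from both $a$ and $b$, so that $|B|=3$. Here the hypothesis that $a$ and $b$ come from different groups is exactly what is needed: $c=1$ would force $a+b=0$, contradicting $a\neq b$; $c=0$ would force $a+b=1$, contradicting the different-group assumption; $c=a$ would force $b=1$, impossible since $b\in X$; and $c=b$ would force $a=1$, equally impossible. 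By Remark~\ref{rem_Wk}, for $k=3$ the extra intersection condition in Definition~\ref{defn_Wk} is vacuous (the range $2\leq\ell\leq k-3$ is empty), so no further membership test on proper subsets of $B$ is required.

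Having produced this unique valid $c$, I would conclude that $B=\{a,b,1+a+b\}$ is the one and only block of $W_3$ containing the pair $\{a,b\}$, giving $\lambda_3=1$. The main obstacle, such as it is, lies in the careful case analysis ruling out $c\in\{0,1,a,b\}$, since each exclusion must invoke the correct hypothesis; the different-group condition $a+b\neq 1$ is the crucial ingredient distinguishing a genuine inter-group pair from an intra-group pair (for which no block of $W_3$ exists, consistent with condition (ii)). Once this is in place, the identification of $(X,W_2,W_3)$ as a $\text{GDD}(2^m-2,2,3)$ follows, with the parameters $v=2^m-2$, $n=2$, $k=3$ matching those recorded in the statement.
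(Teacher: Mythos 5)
Your proposal is correct and follows essentially the same route as the paper's own proof: both identify the unique candidate third element $u+v+1$ forced by the block-sum condition and then verify it lies in $\mathbb{F}_{2^m}\setminus\{0,1,u,v\}$, using the distinctness of the pair and the different-group hypothesis $u+v\neq 1$. Your version merely spells out the four exclusions ($c\neq 0,1,u,v$) and the vacuity of the subset-intersection condition for $k=3$ more explicitly than the paper does.
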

\begin{proof}
Given two distinct elements $u,v \in X=\mathbb{F}_{2^m}\setminus\{0,1\}$ with $\{u,v\}\not\in W_2.$ Let $\lambda_3(u,v)$ be the number of blocks in $W_3$ that contains both $u$ and $v.$ Then, by Lemmas~\ref{lem_W2} and~\ref{lem_W2_group}, it suffices to prove that $\lambda_3(u,v)=1,$ which is independent of the choice of $u$ and $v.$ By letting $k=3$ in Definition~\ref{defn_Wk}, one can see that the only block in $W_3$ that contains $u$ and $v$ is $\{u,v,u+v+1\}.$ Note that $u+v+1 \in \mathbb{F}_{2^m}\setminus\{0,1,u,v\}$ since $u,v$ are two distinct elements in $X$ with $\{u,v\}\not\in W_2.$ The result follows.
\end{proof}

\medskip

Substituting $v=2^m-2,$ $n=2,$ and $k=3$ into~\eqref{eq_r} in Proposition~\ref{prop_r_const} and~\eqref{eq_b} in Proposition~\ref{prop_num_of_block}
gives
$$r_3=\lambda_3\cdot\frac{(2^m-2)-2}{3-1}=\lambda_3\cdot\frac{2^m-4}{2}$$
and
$$b_3=r_3\cdot\frac{2^m-2}{3},$$
respectively.
Therefore Corollary~\ref{cor_W3} follows.
\begin{cor}     \label{cor_W3}
The repetition number of the triple $(X,W_2,W_3)$ is
$$r_3=\frac{2^m-4}{2!}$$
and the number of blocks in $W_3$ is
$$|W_3|=b_3=\frac{(2^m-2)(2^m-4)}{3!}.$$
\qed
\end{cor}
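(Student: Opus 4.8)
The plan is to obtain both quantities by direct substitution into the general formulas established in Propositions~\ref{prop_r_const} and~\ref{prop_num_of_block}, using the value of the balance parameter supplied by Theorem~\ref{thm_W3}. That theorem asserts that $(X,W_2,W_3)$ is a $\text{GDD}(2^m-2,2,3)$ with $\lambda_3=1$, and in particular its block set is precisely $W_3$; hence all four input parameters $v=2^m-2$, $n=2$, $k=3$, and $\lambda=\lambda_3=1$ are known, and the counting formula for $b$ will count exactly the elements of $W_3$.

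First I would compute the repetition number by substituting these values into the formula~\eqref{eq_r}, namely $r=\frac{\lambda(v-n)}{k-1}$. This gives
$$r_3=\frac{1\cdot\big((2^m-2)-2\big)}{3-1}=\frac{2^m-4}{2},$$
and since $2=2!$ this is exactly the claimed expression $r_3=\frac{2^m-4}{2!}$.

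Next I would compute the number of blocks by substituting $v=2^m-2$, $k=3$, and the value of $r_3$ just found into the formula~\eqref{eq_b}, namely $b=\frac{vr}{k}$. This yields
$$b_3=\frac{(2^m-2)\,r_3}{3}=\frac{(2^m-2)(2^m-4)}{2\cdot 3}=\frac{(2^m-2)(2^m-4)}{6},$$
and since $6=3!$ this matches the asserted value $|W_3|=b_3=\frac{(2^m-2)(2^m-4)}{3!}$, where the identification $|W_3|=b_3$ is justified by the remark above that $W_3$ is the block set of the GDD.

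I do not expect any genuine obstacle here: the entire content is the two substitutions together with the value $\lambda_3=1$ borrowed from Theorem~\ref{thm_W3}, so the result is immediate (which is why it is stated as a corollary with a \qed). The only point deserving a word of attention is the cosmetic rewriting of the denominators $2$ and $6$ as $2!$ and $3!$; this factorial form carries no mathematical weight at $k=3$ but is presumably chosen to foreshadow the uniform pattern anticipated for the larger block sizes $k=4,5,6,7$ treated in the remainder of the section.
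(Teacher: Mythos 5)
Your proposal is correct and follows exactly the paper's own route: the paper likewise obtains Corollary~\ref{cor_W3} by substituting $v=2^m-2$, $n=2$, $k=3$, and $\lambda_3=1$ from Theorem~\ref{thm_W3} into the formulas \eqref{eq_r} and \eqref{eq_b} of Propositions~\ref{prop_r_const} and~\ref{prop_num_of_block}. Your additional remarks (that $|W_3|=b_3$ because $W_3$ is the block set, and that the factorial denominators are cosmetic at $k=3$) are accurate and consistent with the paper's presentation.
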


\medskip

Several blocks sets, which will be used in the proofs of the theorems, are defined.
\begin{defn}    \label{defn_omega_tau}
Given two distinct elements $u,v\in X$ with $\{u,v\}\not\in W_2.$ Let $z=u+v$ and $S=\{u,v,u+1,v+1\}.$ For each $k \geq 3,$ define the blocks sets
\begin{eqnarray}
\Omega_{z,k}&=&\{B\in W_k~|~z \in B\},
\nonumber \\
\omega_{\alpha,k}&=&\{B\in \Omega_{z,k}~|~\alpha \in B\setminus\{z\}\}~~~\text{for each}~\alpha\in S,~~~\text{and}
\nonumber \\
\tau_{\alpha,k}&=&\{B\in \Omega_{z,k}~|~\exists a,b\in B\setminus\{z\}\text{~such that~}a+b=\alpha\}~~~\text{for each}~\alpha\in S.
\nonumber
\end{eqnarray}
\end{defn}

\medskip

Below, Example~\ref{exam_m3} is reviewed in order to realize the blocks sets defined in Definition~\ref{defn_omega_tau}.
\begin{exam}       \label{exam_m3_2}
As stated in Example~\ref{exam_m3}, $X=\{\gamma^i\mid i=1,2,\ldots,6\},$ where the elements are defined as $\gamma=x,$ $\gamma^2=x^2,$ $\gamma^3=x+1,$ $\gamma^4=x^2+x,$ $\gamma^5=x^2+x+1,$ and $\gamma^6=x^2+1.$ The collection $W_3$ of $3$-subsets of $X$ is
\begin{equation}
W_3=\{\{\gamma,\gamma^2,\gamma^5\},\{\gamma,\gamma^6,\gamma^4\},
\{\gamma^3,\gamma^2,\gamma^4\},\{\gamma^3,\gamma^6,\gamma^5\}\}.
\nonumber
\end{equation}
Let $u=\gamma$ and $v=\gamma^2.$ Thus, $u+1=\gamma^3,$ $v+1=\gamma^6,$ and $z=u+v=x+x^2=\gamma^4.$ Finally, five blocks subsets of $W_3$ are
\begin{eqnarray}
\Omega_{z,3}&=&\{\{\gamma,\gamma^6,\gamma^4\}\},\{\gamma^3,\gamma^2,\gamma^4\},\{\gamma^3,\gamma^6,\gamma^5\}\}
\nonumber   \\
\omega_{u,3}&=&\omega_{v+1,3}=\{\{\gamma,\gamma^6,\gamma^4\}\},~~~~~~~~\text{and}
\nonumber   \\
\omega_{v,3}&=&\omega_{u+1,3}=\{\{\gamma^3,\gamma^2,\gamma^4\}\}.
\nonumber
\end{eqnarray}
\end{exam}

\medskip

Example~\ref{exam_m3_2} illustrates the case for $m=k=3,$ and it is easy to see that $\tau_{\alpha,3}=\phi$ for $\alpha\in S.$ An example for the case $m=k=4$ is further presented.
\begin{exam}    \label{exam_m4}
Let $m=4$. Let $\gamma=x$ be a primitive element of the finite field $\mathbb{F}_{2^4}\cong \mathbb{F}_2[x]/\langle x^4+x+1\rangle$.
Then, $X=\{\gamma^i\mid i=1,2,\ldots,14\},$ where the elements are presented as follows:
\begin{center}
\begin{tabular}{l|l}
  $i$ & $\gamma^i$ \\
  \hline
  2 & $x^2$ \\
  3 & $x^3$ \\
  4 & $x+1$ \\
  5 & $x^2+x$ \\
  6 & $x^3+x^2$ \\
  7 & $x^3+x+1$ \\
\end{tabular}
~~~~~~~~~~~
\begin{tabular}{l|l}
  8 & $x^2+1$ \\
  9 & $x^3+x$ \\
  10 & $x^2+x+1$ \\
  11 & $x^3+x^2+x$ \\
  12 & $x^3+x^2+x+1$ \\
  13 & $x^3+x^2+1$ \\
  14 & $x^3+1$ \\
\end{tabular}.
\end{center}
Let $u=\gamma$ and $v=\gamma^2.$ It is not difficult to check that $u+1=\gamma^4,$ $v+1=\gamma^8,$ and $z=u+v=x+x^2=\gamma^5.$ For $\alpha,\beta\in S=\{u,v,u+1,v+1\},$ the blocks subsets $\Omega_{z,4},$ $\omega_{\alpha,4},$ and $\tau_{\beta,4}$ of $W_4$ can be written as
\begin{eqnarray}
\Omega_{z,4}&=&\left\{\begin{array}{ccc}
                        \{\gamma^5,\gamma,\gamma^3,\gamma^{13}\}, &
                        \{\gamma^5,\gamma,\gamma^6,\gamma^{14}\}, &
                        \{\gamma^5,\gamma,\gamma^7,\gamma^{11}\}, \\ \{\gamma^5,\gamma,\gamma^9,\gamma^{12}\}, &
                        \{\gamma^5,\gamma^2,\gamma^3,\gamma^7\}, & \{\gamma^5,\gamma^2,\gamma^6,\gamma^{12}\}, \\ \{\gamma^5,\gamma^2,\gamma^9,\gamma^{14}\}, & \{\gamma^5,\gamma^2,\gamma^{11},\gamma^{13}\}, &
                        \{\gamma^5,\gamma^3,\gamma^4,\gamma^6\}, \\
                        \{\gamma^5,\gamma^3,\gamma^8,\gamma^9\}, & \{\gamma^5,\gamma^4,\gamma^7,\gamma^{12}\}, & \{\gamma^5,\gamma^4,\gamma^9,\gamma^{11}\}, \\
                        \{\gamma^5,\gamma^4,\gamma^{13},\gamma^{14}\}, & \{\gamma^5,\gamma^6,\gamma^8,\gamma^{11}\}, & \{\gamma^5,\gamma^7,\gamma^8,\gamma^{14}\}, \\ \{\gamma^5,\gamma^8,\gamma^{12},\gamma^{13}\} &&
                      \end{array}
\right\},
\nonumber   \\
\omega_{u,4}&=&\tau_{v+1,4}
\nonumber   \\
&=&\{\{\gamma^5,\gamma,\gamma^3,\gamma^{13}\},\{\gamma^5,\gamma,\gamma^6,\gamma^{14}\},
\{\gamma^5,\gamma,\gamma^7,\gamma^{11}\},\{\gamma^5,\gamma,\gamma^9,\gamma^{12}\}\},
\nonumber   \\
\omega_{v,4}&=&\tau_{u+1,4}
\nonumber   \\
&=&\{\{\gamma^5,\gamma^2,\gamma^3,\gamma^7\},\{\gamma^5,\gamma^2,\gamma^6,\gamma^{12}\},
\{\gamma^5,\gamma^2,\gamma^9,\gamma^{14}\},\{\gamma^5,\gamma^2,\gamma^{11},\gamma^{13}\}\},
\nonumber   \\
\omega_{u+1,4}&=&\tau_{v,4}
\nonumber   \\
&=&\{\{\gamma^5,\gamma^4,\gamma^7,\gamma^{12}\},\{\gamma^5,\gamma^3,\gamma^4,\gamma^6\},
\{\gamma^5,\gamma^4,\gamma^7,\gamma^{12}\},\{\gamma^5,\gamma^4,\gamma^{13},\gamma^{14}\}\},
\nonumber   \\
\text{and}
\nonumber   \\
\omega_{v+1,4}&=&\tau_{u,4}
\nonumber   \\
&=&\{\{\gamma^5,\gamma^3,\gamma^8,\gamma^9\},\{\gamma^5,\gamma^6,\gamma^8,\gamma^{11}\},
\{\gamma^5,\gamma^7,\gamma^8,\gamma^{14}\},\{\gamma^5,\gamma^8,\gamma^{12},\gamma^{13}\}\}.
\nonumber
\end{eqnarray}
\end{exam}

\medskip

The detailed results of the blocks subsets of $W_k$ are provided in the following lemma.
\begin{lem}     \label{lem_omega_tau}
The relations between the blocks sets in Definition~\ref{defn_omega_tau} are described below.
\begin{itemize}
\item[(i)]  $\omega_{\alpha,k},\tau_{\beta,k} \subset \Omega_{z,k}$ for $\alpha,\beta\in S.$
\item[(ii)]    $\omega_{u,3}=\omega_{v+1,3}$ and $\omega_{v,3}=\omega_{u+1,3}.$ If $k\geq 4,$ then $\omega_{\alpha,k}\cap\omega_{\beta,k}=\phi$ for distinct $\alpha,\beta\in S.$
\item[(iii)]     $\omega_{u,4}=\tau_{v+1,4}$ and $\omega_{v,4}=\tau_{u+1,4}.$ If $k \geq 5,$ then $\omega_{\alpha,k}\cap\tau_{\beta,k}=\phi$ for $\alpha,\beta\in S.$
\item[(iv)]     $\tau_{u,5}=\tau_{v+1,5}$ and $\tau_{v,5}=\tau_{u+1,5}.$ If $k\geq 6,$ then $\tau_{\alpha,k}\cap\tau_{\beta,k}=\phi$ for distinct $\alpha,\beta\in S.$
\end{itemize}
\end{lem}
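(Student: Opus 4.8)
Part~(i) is immediate from Definition~\ref{defn_omega_tau}, since both $\omega_{\alpha,k}$ and $\tau_{\beta,k}$ are defined as sub-collections of $\Omega_{z,k}$. For everything else the plan is to first replace the recursive definition of $W_k$ by a single structural criterion: for every $k\ge2$, $B\in W_k$ if and only if $|B|=k$, $\sum_{i\in B}i=1$, and no nonempty subset of $B$ sums to $0$ (equivalently, $B$ is an $\mathbb{F}_2$-linearly independent subset of $X$ with block sum $1$). I would prove this by strong induction on $k$. The base case $k=2$ is Lemma~\ref{lem_W2} together with the fact that a pair of sum $1$ is automatically zero-sum-free. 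The inductive step rests on complementation in characteristic $2$: for a nonempty proper $C\subset B$ with $\sum B=1$, one has $\sum C=1$ exactly when $\sum(B\setminus C)=0$. Thus if $B$ had a nonempty zero-sum subset, a minimal one $D$ would satisfy $|D|\ge3$ (singletons and distinct pairs cannot vanish), so $E:=B\setminus D$ would be nonempty with $\sum E=1$ and $|E|\le k-3$; a size-minimal sum-$1$ subset $F\subseteq E$ is then zero-sum-free and so lies in $W_{|F|}$ with $2\le|F|\le k-3$ by the induction hypothesis, contradicting the defining condition of $W_k$ (as extended in Remark~\ref{rem_Wk}). The upshot I would exploit repeatedly is that a block of $W_k$ cannot exist once one exhibits a nonempty subset of it summing to $0$.

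Granting this criterion, the equalities are forced-complement computations using $z=u+v$. For~(ii), the only sum-$1$ triple through $z$ and $u$ is $\{z,u,v+1\}$ (its third element is $1+z+u=v+1$), which is also the only one through $z$ and $v+1$; hence $\omega_{u,3}=\omega_{v+1,3}$, and symmetrically $\omega_{v,3}=\omega_{u+1,3}$. For~(iii), if $B\in\omega_{u,4}$ then $B\setminus\{z,u\}$ is a pair of sum $v+1$, so $B\in\tau_{v+1,4}$; conversely a pair of sum $v+1$ in $B\setminus\{z\}$ forces the remaining non-$z$ element to equal $u$, so $B\in\omega_{u,4}$; this gives $\omega_{u,4}=\tau_{v+1,4}$, and symmetrically $\omega_{v,4}=\tau_{u+1,4}$. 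For~(iv), the set $B\setminus\{z\}$ has four elements of total sum $z+1$, so a pair inside it sums to $u$ precisely when the complementary pair sums to $v+1$; hence $\tau_{u,5}=\tau_{v+1,5}$, and symmetrically $\tau_{v,5}=\tau_{u+1,5}$.

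The disjointness assertions all follow one template. A block $B$ lying in the intersection must contain a small set $T$ assembled from $z$ and the witnessing elements, and I would compute $\sum T$ from $z=u+v$ and the additive relations among $u,v,u+1,v+1$, always landing in $\{0,1,z,z+1\}$. If $\sum T=0$ the criterion is contradicted at once; if $\sum T=1$ the complement $B\setminus T$ is a nonempty zero-sum subset (nonempty by the hypothesis on $k$); and if $\sum T\in\{z,z+1\}$, then since $z\in T$ the set $T\setminus\{z\}$ sums to $0$ or $1$ respectively, returning to the first two cases. Concretely I would take $T=\{z,\alpha,\beta\}$ in~(ii); $T=\{z,\alpha,a,b\}$ with $\{a,b\}$ the witnessing pair in~(iii), separately treating the degenerate subcase $\alpha\in\{a,b\}$, which either forces $b=z$ (impossible) or produces the sum-$1$ pair $\{z,z+1\}\subset B$; and $T=\{z\}\cup P\cup Q$ for the two witnessing pairs $P,Q$ in~(iv), organised by $|P\cap Q|\in\{0,1,2\}$, the value $2$ forcing the excluded equality $\alpha=\beta$.

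The main obstacle is the structural criterion itself: its inductive proof, and in particular the estimate $|E|\le k-3$ that keeps the argument inside the range $2\le\ell\le k-3$ allowed by the definition, is where the genuine content lies. After that the four parts are a finite, mechanical verification, and the thresholds $k\ge4$, $k\ge5$, $k\ge6$ in~(ii)--(iv) have a single explanation: the set $T$ exhibited in each case has size $3$, $4$, $5$ respectively, so forcing its complement $B\setminus T$ to be nonempty requires $k$ to exceed that size.
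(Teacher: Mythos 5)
Your proposal is correct, and its case analyses for (i)--(iv) track the paper's own proof almost step for step: the three equalities come from the complementation identity $z+u+v+1=1$, and each disjointness claim comes from exhibiting a small subset of $B$ whose sum lies in $\{0,1\}$ (possibly after adjoining or deleting $z$), contradicting the subset-exclusion conditions. The genuine difference is foundational. The paper justifies every such contradiction by citing Remark~\ref{rem_Wk}, which literally only extends the exclusion ${B \choose \ell}\cap W_\ell=\phi$ from $2\le\ell\le k-3$ to $2\le\ell\le k-1$; it says nothing about subsets summing to $0$, so steps of the form ``$\alpha+\beta+z\in\{0,1\}$ contradicts Remark~\ref{rem_Wk}'' are strictly speaking loose, and are rescued only because for $\ell\le 4$ the collection $W_\ell$ carries no side conditions, so within the paper's range $k\le 7$ every complement that arises is small enough to lie in some $W_\ell$ outright. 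Your structural criterion --- $B\in W_k$ iff $|B|=k$, $\sum_{i\in B}i=1$, and no nonempty subset of $B$ sums to $0$, proved by strong induction via the minimal-subset trick --- is exactly the lemma the paper skips: it legitimizes the zero-sum contradictions for \emph{every} $k$ (the lemma is stated for general $k$, not just $k\le 7$), and as a by-product it shows that the recursive Definition~\ref{defn_Wk} agrees with the subset-sum reformulation used in Conjecture~\ref{conj_con_rem}, an equivalence the paper uses implicitly but never establishes. So your route costs one induction that the paper avoids, and buys full rigor at the stated generality plus the definitional equivalence.

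One small repair in part (iv): when the two witnessing pairs share exactly one element ($|P\cap Q|=1$, say $a=c$), the set $T=\{z\}\cup P\cup Q=\{z,a,b,d\}$ has sum $z+\alpha+\beta+a$, which does \emph{not} land in $\{0,1,z,z+1\}$, so your template cannot be applied to that $T$ verbatim. Take instead $T=\{z\}\cup(P\,\triangle\,Q)=\{z,b,d\}$, whose sum is $z+\alpha+\beta$ (this is the paper's step ``$a=c$, hence $b+d=\alpha+\beta$''); after this substitution your three-way case split goes through unchanged, and the threshold explanation still matches since the disjoint-pairs case is the one that needs $k\ge 6$.
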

\begin{proof}
(i) is directly from Definition~\ref{defn_omega_tau}.

\smallskip

To prove (ii), it follows from Definition~\ref{defn_omega_tau} that $\omega_{\alpha,3}=\omega_{\alpha+u+v+1,3}$ since $z+u+v+1=1.$ For $k\geq 4,$ suppose to the contrary that there exists $B\in \omega_{\alpha,k}\cap\omega_{\beta,k}$ for some distinct $\alpha,\beta\in S.$ Then, $\alpha,\beta,z\in B.$ Note that $\alpha+\beta\in\{1,u+v,u+v+1\}.$ If $\alpha+\beta=1$, then it contradicts to the definition that ${B \choose 2}\cap W_2=\phi.$ If $\alpha+\beta\in \{u+v,u+v+1\}$, then $\alpha+\beta+z\in\{0,1\}$ which also contradicts to Remark~\ref{rem_Wk}.

\smallskip

To verify (iii), Definition~\ref{defn_omega_tau} indicates that $\omega_{\alpha,4}=\tau_{\alpha+u+v+1,4}$ since $z+u+v+1=1.$ For $k \geq 5,$ suppose to the contrary that there exists $B\in \omega_{\alpha,k}\cap\tau_{\beta,k}$ for some $\alpha,\beta\in S.$ Let distinct $a,b\in B$ such that $a+b=\beta.$ Assume that $a,b$ are both not $\alpha.$ Then, $\alpha+a+b\in\{0,1,u+v,u+v+1\}.$ If $\alpha+a+b\in\{0,1\}$ then it contradicts to Remark~\ref{rem_Wk}. If $\alpha+a+b\in\{u+v,u+v+1\}$, then $z+\alpha+a+b\in\{0,1\},$ which also contradicts to Remark~\ref{rem_Wk}. Thus, without loss of generality, suppose $a=\alpha.$ Then, $b=\alpha+\beta.$ Since $b\not\in \{0,1\},$ $b\in\{u+v,u+v+1\}.$ However, it is easily seen that $b+z\in\{0,1\},$ which contradicts to Remark~\ref{rem_Wk}.

\smallskip

To prove (iv), it is directly from Definition~\ref{defn_omega_tau} that $\tau_{\alpha,5}=\tau_{\alpha+u+v+1,5}$ since $z+u+v+1=1.$ For $k\geq 6,$ suppose to the contrary that there exists $B\in \tau_{\alpha,k}\cap\tau_{\beta,k}$ for some distinct $\alpha,\beta\in S.$ Let distinct $a,b \in B$ and distinct $c,d\in B$ such that $a+b=\alpha$ and $c+d=\beta.$ Assume that $\{a,b\}\cap\{c,d\}=\phi.$ Then, $a+b+c+d\alpha+\beta\in\{0,1,u+v,u+v+1\}.$ If $a+b+c+d\in\{0,1\}$ then it contradicts to Remark~\ref{rem_Wk}. If $a+b+c+d=\in\{u+v,u+v+1\}$, then $z+a+b+c+d\in\{0,1\},$ which also contradicts to Remark~\ref{rem_Wk}. Thus, without loss of generality, suppose $a=c.$ Then, $b+d=\alpha+\beta\in\{0,1,u+v,u+v+1\}.$ If $b+d\in\{0,1\}$, then it contradicts to Remark~\ref{rem_Wk}. If $b+d=\in\{u+v,u+v+1\}$, then $z+b+d\in\{0,1\},$ which also contradicts to Remark~\ref{rem_Wk}. The proof of this lemma is complete.
\end{proof}

\medskip

\begin{lem}     \label{lem_omega_tau_card}
Given two distinct elements $u,v \in X$ and $z=u+v.$ Then for each $\alpha\in S=\{u,v,u+1,v+1\},$
\begin{eqnarray}
|\Omega_{z,k}|&=&r_{k}~~~\text{~for~}k\geq 4,
\nonumber   \\
|\omega_{\alpha,k}|&=&\lambda_{k}~~~\text{~for~}k\geq 4,
\nonumber   \\
|\tau_{\alpha,4}|&=&\frac{1}{2}(2^m-2^3)~~~\text{~for~}m\geq 4,
\nonumber   \\
|\tau_{\alpha,5}|&=&\frac{1}{4}(2^m-2^3)(2^m-2^4)~~~\text{~for~}m\geq 5,~~~\text{and}
\nonumber   \\
|\tau_{\alpha,6}|&=&\frac{1}{12}(2^m-2^3)(2^m-2^4)(2^m-2^5)~~~\text{~for~}m\geq 6.
\nonumber
\end{eqnarray}
\end{lem}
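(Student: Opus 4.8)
The plan is to handle the five identities in two groups. The first two, $|\Omega_{z,k}|=r_k$ and $|\omega_{\alpha,k}|=\lambda_k$, are essentially definitional once the GDD structure for $W_k$ is in force. First I would record that $z=u+v$ lies in $X$: it is nonzero because $u\neq v$, and it is not $1$ because $\{u,v\}\notin W_2$ forces $u+v\neq 1$. Since $\Omega_{z,k}$ is exactly the set of blocks of $W_k$ through the point $z\in X$, Proposition~\ref{prop_r_const} gives $|\Omega_{z,k}|=r_k$. For $\omega_{\alpha,k}$ I would check that for every $\alpha\in S$ the pair $\{z,\alpha\}$ is a transversal pair, i.e. meets two different groups of $W_2$: a direct computation gives $z+u=v$, $z+v=u$, $z+(u+1)=v+1$, and $z+(v+1)=u+1$, so $z+\alpha\in S\subset X$ is never $1$ and $z\neq\alpha$; hence $\{z,\alpha\}\notin W_2$ and, by the balance condition, $\{z,\alpha\}$ lies in exactly $\lambda_k$ blocks, which is precisely $|\omega_{\alpha,k}|$.

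The three $\tau$-cardinalities require a genuine count, and I would organize it as a sequential choice governed by an $\mathbb{F}_2$-independence invariant. The first step is structural: every $B\in\tau_{\alpha,k}$ contains a \emph{unique} pair $\{a,b\}\subset B\setminus\{z\}$ with $a+b=\alpha$. Two such pairs sharing a point would force a repeated element, while two disjoint pairs would give $a+b+a'+b'=0$ and hence, after subtracting from $\sum_{x\in B}x=1$, either $z=1$ (for $k=5$) or a forbidden sum-$1$ pair $\{z,f\}$ (for $k=6$); for $k=4$ there is no room for two disjoint pairs at all. Consequently $B$ decomposes uniquely as $\{z\}\sqcup\{a,b\}\sqcup R$, where $R$ is the set of the remaining $k-3$ points and $\sum_{x\in R}x=\sigma:=1+z+\alpha$. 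I would then count ordered data: choose $a$ (so $b=a+\alpha$), then the points of $R$ one at a time, the final point of $R$ being forced by the sum $\sigma$; each block is produced exactly $2\cdot(k-3)!$ times, accounting for the order within $\{a,b\}$ and within $R$.

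The heart of the argument is that at each stage the set of forbidden values for the newly chosen point is exactly the $\mathbb{F}_2$-span of $1$ together with the already-fixed points, and that this span has full size $2^{d}$ with $d$ the current number of independent directions. Concretely, when choosing $a$ the forbidden set is $\langle z,\alpha\rangle+\{0,1\}$ (size $2^3$); when choosing the first free point of $R$ it is $\langle z,a,b\rangle+\{0,1\}$ (size $2^4$); when choosing the second it is $\langle z,a,b,c\rangle+\{0,1\}$ (size $2^5$). The forbidden values come from four sources: membership ($\{0,1\}$), coincidence with a fixed point, creation of a sum-$1$ pair, and (for $k\geq 6$) creation of a sum-$1$ triple; crucially, once a point is fixed the complementary partial sum is determined, which forces several further values (for instance $c\neq\alpha,\ z+a,\ z+b$ appear because the pair sum $d+e=\sigma+c$ is already pinned down). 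Collecting these, the forbidden set is precisely the union of the $\{0,1\}$-cosets indexed by the subset sums of the fixed points, so it has size $2^{d}$ provided those subset sums lie in distinct cosets, i.e. provided the fixed points together with $1$ are linearly independent over $\mathbb{F}_2$. This independence is the invariant: a point chosen outside the current span keeps the enlarged set independent, so the next forbidden set again attains full size. Multiplying $(2^m-2^3)(2^m-2^4)\cdots$ and dividing by $2\cdot(k-3)!$ yields the stated formulas.

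I expect the main obstacle to be the bookkeeping of the forbidden values in the $k=6$ case. There the $W_6$-definition activates the ``no triple sums to $1$'' condition, which both enlarges the list of forbidden values and makes their exact enumeration delicate: one must show that the membership/coincidence values, the sum-$1$-pair values, and the extra triple-forced values $\alpha,\ z+a,\ z+b$ (together with their translates by $1$) fill out precisely the $2^5$ points of $\langle z,a,b,c\rangle+\{0,1\}$ with no collisions and no omissions. Verifying that these families are disjoint and exhaust the span reduces, after the coset bookkeeping, to checking that no nonempty subset of $\{z,a,b,c\}$ sums to $0$ or $1$, which is exactly the independence invariant maintained by the earlier choices, together with the hypotheses $u\neq v$ and $\{u,v\}\notin W_2$.
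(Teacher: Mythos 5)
Your proposal is correct and follows essentially the same route as the paper's proof: the same decomposition $B=\{z\}\cup\{a,\alpha+a\}\cup R$ with the last element of $R$ forced by the sum condition, the same sequential count with forbidden sets $\{0,1\}+\{0,u\}+\{0,v\}+\{0,a\}+\cdots$ of sizes $2^3,2^4,2^5$, and the same division by $2\cdot(k-3)!$ for the unordered choices. The only difference is that you make explicit three points the paper treats implicitly --- that $z\in X$ and $\{z,\alpha\}\notin W_2$ (so $r_k$ and $\lambda_k$ apply), that the pair summing to $\alpha$ is unique (so no block is overcounted), and that the $\mathbb{F}_2$-independence of $1,u,v,a,\ldots$ makes the forbidden sets have full size --- which strengthens rather than changes the argument.
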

\begin{proof}
Note that $|\omega_{\alpha,k}|=|\omega_{\beta,k}|$ and $|\tau_{\alpha,k}|=|\tau_{\beta,k}|$ for any $\alpha,\beta\in S$ because of the symmetry. Fixed some $\alpha\in S,$ the cardinalities of $\Omega_{z,k}$ and $\omega_{\alpha,k}$ are from the definition of the repetition number $r_k$ and balance parameter $\lambda_k,$ respectively.

\smallskip

To count $|\tau_{\alpha,4}|,$ let $B\in \tau_{\alpha,4}$ such that $B=\{z,a,\alpha+a,z+\alpha+1\}$ without loss of generality. Note that $B\subset X=\mathbb{F}_{2^m}\setminus\{0,1\}$ and by Remark~\ref{rem_Wk}, ${B \choose 2}\cap W_2={B \choose 3}\cap W_3=\phi.$ Hence $a$ can be chosen from
$$\mathbb{F}_{2^m}\setminus\left(\{0,1\}+\{0,u\}+\{0,v\}\right)
=\mathbb{F}_{2^m}\setminus\{0,1,u,u+1,v,v+1,u+v,u+v+1\}$$
where the addition $+$ between two subsets $A,B$ of $\mathbb{F}_{2^m}$ is defined as $A+B=\{i+j~|~i\in A~\text{and}~j\in B\}.$ Since the two elements $a$ and $\alpha+a$ are not ordered, there are $\frac{2^m-2^3}{2}$ ways to determine $B,$ which implies $|\tau_{\alpha,4}|=\frac{1}{2}(2^m-2^3).$

\smallskip

To count $|\tau_{\alpha,5}|,$ let $B\in \tau_{\alpha,5}$ such that $B=\{z,a,\alpha+a,b,z+\alpha+1+b\}$ without loss of generality. There are $\frac{2^m-2^3}{2}$ ways to determine the elements $a$ and $\alpha+a$ from the argument of counting $|\tau_{\alpha,4}|.$ Note that $B\subset X$ and by Remark~\ref{rem_Wk} we have ${B \choose 2}\cap W_2={B \choose 3}\cap W_3={B \choose 4}\cap W_4=\phi.$ Hence, $b$ can be chosen from
$$\mathbb{F}_{2^m}\setminus\left(\{0,1\}+\{0,u\}+\{0,v\}+\{0,a\}\right).$$
Since the two elements $b$ and $z+\alpha+1+b$ are not ordered, there are $\frac{2^m-2^4}{2}$ ways to determine them, which implies $$|\tau_{\alpha,5}|=\frac{1}{2}(2^m-2^3)\cdot\frac{1}{2}(2^m-2^4)=\frac{1}{4}(2^m-2^3)(2^m-2^4).$$

\smallskip

To count $|\tau_{\alpha,6}|,$ let $B\in \tau_{\alpha,6}$ such that $B=\{z,a,\alpha+a,b,c,z+\alpha+1+b+c\}$ without loss of generality. There are $\frac{2^m-2^3}{2}$ ways to determine the elements $a$ and $\alpha+a$ from the argument of counting $|\tau_{\alpha,4}|.$ Similar with the lower part of counting $|\tau_{\alpha,5}|,$ there are $(2^m-2^4)$ ways to pick $b,$ and then $(2^m-2^5)$ ways to pick $c.$ Since the three elements $b,$ $c$ and $z+\alpha+1+b+c$ are not ordered, there are $\frac{1}{3!}(2^m-2^4)(2^m-2^5)$ ways to determine them, which implies $$|\tau_{\alpha,6}|=\frac{1}{2}(2^m-2^3)\cdot\frac{1}{3!}(2^m-2^4)(2^m-2^5)=\frac{1}{12}(2^m-2^3)(2^m-2^4)(2^m-2^5).$$
\end{proof}

\medskip

The blocks sets introduced in Definition~\ref{defn_omega_tau} will be used to construct group divisible designs with block size $4,5,6$ and $7.$

We are now ready to present a GDD with block size $4$. The cardinalities $|\Omega_{z,3}|$ and $|\omega_{\alpha,3}|$ found in Lemma~\ref{lem_omega_tau_card} help in counting the parameter $\lambda_4.$
\begin{thm}     \label{thm_W4}
If $m \geq 4,$ then the triple $(X,W_2,W_4)$ is a $\text{GDD}(2^m-2,2,4)$ with balance parameter
$$\lambda_4=\frac{2^m-8}{2}.$$
\end{thm}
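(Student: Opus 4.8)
The plan is to verify condition (iii) of Definition~\ref{defn_GDD}, the balance condition, since conditions (i) and (ii) for the triple $(X,W_2,W_4)$ are already supplied by Lemmas~\ref{lem_W2} and~\ref{lem_W2_group}. First I would fix two distinct elements $u,v\in X$ lying in different groups of $W_2$, that is $\{u,v\}\not\in W_2$, and write $z=u+v$ and $S=\{u,v,u+1,v+1\}$ as in Definition~\ref{defn_omega_tau}. Letting $\lambda_4(u,v)$ denote the number of blocks of $W_4$ containing both $u$ and $v$, the entire task is to show $\lambda_4(u,v)=\frac{2^m-8}{2}$ independently of the choice of $u,v$.

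The central idea is a bijection between the blocks of $W_4$ through $\{u,v\}$ and a subfamily of $\Omega_{z,3}$. A block of $W_4$ containing $u,v$ has the form $B=\{u,v,a,b\}$, and the block-sum condition $\sum_{i\in B}i=1$ forces $a+b=z+1$, so the triple $B'=\{z,a,b\}$ satisfies $z+a+b=1$. I would first check that $B'\in\Omega_{z,3}$: that $z\in X$ (here $\{u,v\}\not\in W_2$ gives $z\neq 1$ and $u\neq v$ gives $z\neq 0$), that $z,a,b$ are distinct and lie in $X$, and that no pair among them sums to $1$ (the latter holds since $z+a=1$ or $z+b=1$ would force the remaining element to be $0\not\in X$). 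Conversely, from a block $\{z,a,b\}\in\Omega_{z,3}$ I would reconstruct $B=\{u,v,a,b\}$ and verify $B\in W_4$. The decisive observation is that the forbidden-subset requirements from Remark~\ref{rem_Wk}, namely ${B\choose 2}\cap W_2=\phi$ and ${B\choose 3}\cap W_3=\phi$, collapse to one clean condition: among the pairwise sums, $u+v=z\neq 1$ and $a+b=z+1\neq 1$ hold automatically, while $u+a,u+b,v+a,v+b$ equal $1$ exactly when $a$ or $b$ lies in $\{u+1,v+1\}$; among the triple sums, $u+a+b=v+1$, $v+a+b=u+1$, $u+v+a=z+a$ and $u+v+b=z+b$ can never equal $1$ without forcing some element to $0$ or $1$. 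Hence the only genuine constraints are that $a,b$ avoid $\{u,v\}$ and $\{u+1,v+1\}$, i.e.\ $\{a,b\}\cap S=\phi$. This identifies the blocks of $W_4$ through $\{u,v\}$ precisely with $\Omega_{z,3}\setminus\bigcup_{\alpha\in S}\omega_{\alpha,3}$.

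With the bijection in hand the count is routine inclusion--exclusion:
$$\lambda_4(u,v)\;=\;|\Omega_{z,3}|\;-\;\Big|\,\bigcup_{\alpha\in S}\omega_{\alpha,3}\,\Big|.$$
Here $|\Omega_{z,3}|=r_3=\frac{2^m-4}{2}$ is the repetition number from Corollary~\ref{cor_W3}. For the union, Lemma~\ref{lem_omega_tau}(ii) gives the coincidences $\omega_{u,3}=\omega_{v+1,3}$ and $\omega_{v,3}=\omega_{u+1,3}$, reducing the four sets to two; these two are disjoint because a common block would have to be $\{z,u,v\}$, whose sum is $z+u+v=0\neq 1$. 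Since each pair $\{z,\alpha\}$ with $\alpha\in S$ comes from different $W_2$-groups, the balance parameter of $(X,W_2,W_3)$ yields $|\omega_{\alpha,3}|=\lambda_3=1$ by Theorem~\ref{thm_W3}, so $\big|\bigcup_{\alpha\in S}\omega_{\alpha,3}\big|=2$. Therefore $\lambda_4(u,v)=\frac{2^m-4}{2}-2=\frac{2^m-8}{2}$, independent of $u$ and $v$, which is exactly the asserted balance parameter.

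I expect the main obstacle to be the bijection step, specifically the bookkeeping that every auxiliary condition defining $W_4$ either holds automatically or reduces to $\{a,b\}\cap S=\phi$; one must carefully check each of the six pairwise sums and four triple sums against $1$ and confirm that none of the ``automatic'' cases secretly coincides with an excluded value. The hypothesis $m\geq 4$ enters to ensure $X$ is large enough that the resulting count $\frac{2^m-8}{2}$ is a genuine nonnegative number of blocks rather than a degenerate value.
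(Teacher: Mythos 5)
Your proof is correct and follows essentially the same route as the paper: the correspondence $B=\{u,v,a,b\}\leftrightarrow\overline{B}=\{z,a,b\}\in\Omega_{z,3}$, the reduction of the membership conditions to $\{a,b\}\cap S=\phi$, and the count $\lambda_4(u,v)=|\Omega_{z,3}|-2|\omega_{u,3}|=r_3-2\lambda_3=\frac{2^m-8}{2}$ via Lemma~\ref{lem_omega_tau}~(ii). You in fact supply details the paper leaves implicit (the explicit disjointness of $\omega_{u,3}$ and $\omega_{v,3}$, and citing Theorem~\ref{thm_W3} and Corollary~\ref{cor_W3} for the cardinalities, which is cleaner since Lemma~\ref{lem_omega_tau_card} is stated only for $k\geq 4$), so no changes are needed.
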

\begin{proof}
Given two distinct elements $u,v \in X$ with $\{u,v\}\not\in W_2.$ Let $\lambda_4(u,v)$ be the number of blocks in $W_4$ that contains both $u$ and $v.$ Let $z=u+v$ and $S=\{u,v,u+1,v+1\}.$ The blocks sets $\Omega_{z,3}$ and $\omega_{\alpha,3}$ for $\alpha\in S$ are mentioned in Definition~\ref{defn_omega_tau}.

Note that a block $B\in W_4$ that contains both $u$ and $v$ corresponds to a unique block $\overline{B} \in \Omega_{z,3}$ such that $B\setminus \{u,v\}=\overline{B}\setminus\{z\}.$ However, according to the above corresponding rule, for each block $B \in W_4,$ $|B|=4$ and ${B \choose 2}\cap W_2=\phi$ imply $S\cap\overline{B}=\phi.$
Applying Lemma~\ref{lem_omega_tau}~(ii) and the cardinalities $|\Omega_{z,3}|,$ $|\omega_{u,3}|$ given in Lemma~\ref{lem_omega_tau_card} yields
$$\lambda_4(u,v)=|\Omega_{z,3}|-2|\omega_{u,3}|=r_3-2\lambda_3.$$
The Venn diagram for $\Omega_{z,3}$ is shown in Figure~1. Moreover, from the values of $\lambda_3$ in Theorem~\ref{thm_W3} and $r_3$ in Corollary~\ref{cor_W3}, one has
$$\lambda_4(u,v)=\frac{2^m-4}{2!}-2=\frac{2^m-8}{2!},$$
which is independent of the choice of $x$ and $y.$ The desired conclusion follows.
\end{proof}

\begin{center}
\begin{tikzpicture}
\fill[mygray] (0,0) rectangle (6,4);
\fill[white] (1.8,2) circle (0.8)
             (4.2,2) circle (0.8);
\draw (1.8,2) circle (0.8) (1.8,2.8)  node [text=black,above] {$\omega_{u,3}=\omega_{v+1,3}$}
      (4.2,2) circle (0.8) (4.2,2.8)  node [text=black,above] {$\omega_{v,3}=\omega_{u+1,3}$}
      (0,0) rectangle (6,4) node [text=black,above] {$\Omega_{z,3}$};
\end{tikzpicture}\\
\bigskip
{\bf Figure 1:} The Venn diagram for the proof of Theorem~\ref{thm_W4}.
\end{center}

\medskip

Substituting $v=2^m-2,$ $n=2,$ and $k=3$ into~\eqref{eq_r} in Proposition~\ref{prop_r_const} and~\eqref{eq_b} in Proposition~\ref{prop_num_of_block}
gives
$$r_4=\lambda_4\cdot\frac{(2^m-2)-2}{4-1}=\lambda_4\cdot\frac{2^m-4}{3}$$
and
$$b_4=r_4\cdot\frac{2^m-2}{4},$$
respectively.
Thus one has Corollary~\ref{cor_W4}.
\begin{cor}     \label{cor_W4}
The repetition number of the triple $(X,W_2,W_4)$ is
$$r_4=\frac{(2^m-4)(2^m-8)}{3!}$$
and the number of blocks in $W_4$ is
$$|W_4|=b_4=\frac{(2^m-2)(2^m-4)(2^m-8)}{4!}.$$
\qed
\end{cor}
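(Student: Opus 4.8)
The plan is to obtain both quantities purely by substitution, feeding the parameters of the design $(X,W_2,W_4)$ into the two general counting identities already available: the repetition-number formula of Proposition~\ref{prop_r_const} and the block-count formula of Proposition~\ref{prop_num_of_block}. By Theorem~\ref{thm_W4} the triple $(X,W_2,W_4)$ is a $\text{GDD}(2^m-2,2,4)$ with balance parameter $\lambda_4=\frac{2^m-8}{2}$, so every input these identities require is already in hand; no further examination of the internal structure of $W_4$ is needed, and the entire argument is a mechanical computation.

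First I would compute the repetition number. Setting $v=2^m-2$, $n=2$, $k=4$, and $\lambda=\lambda_4$ in~\eqref{eq_r} gives
$$r_4=\frac{\lambda_4\bigl((2^m-2)-2\bigr)}{4-1}=\frac{1}{3}\cdot\frac{2^m-8}{2}\cdot(2^m-4).$$
Recognizing $3\cdot 2=6=3!$, this collapses to $r_4=\frac{(2^m-4)(2^m-8)}{3!}$, the claimed value. I would then feed this into the block count: applying~\eqref{eq_b} with the same $v$ and $k$ and with $r=r_4$ yields
$$b_4=\frac{v\,r_4}{k}=\frac{2^m-2}{4}\cdot\frac{(2^m-4)(2^m-8)}{3!},$$
and since $4\cdot 3!=4!$ the denominators combine to give $b_4=\frac{(2^m-2)(2^m-4)(2^m-8)}{4!}$. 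The equality $|W_4|=b_4$ holds by the definition of the block count, completing the derivation.

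The honest assessment is that there is no genuine obstacle here: the statement is a direct corollary of two previously established propositions together with the value of $\lambda_4$, which is presumably why its derivation is relegated to the preceding display rather than a separate proof. The only point requiring the slightest care is the factorial normalization, i.e.\ rewriting the numerical denominators $6$ and $24$ as $3!$ and $4!$ so that the answers appear in the intended closed form and match the pattern anticipated for the larger block sizes.
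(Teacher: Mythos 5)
Your proposal is correct and follows exactly the paper's own route: substitute $v=2^m-2$, $n=2$, $k=4$, and $\lambda_4=\frac{2^m-8}{2}$ from Theorem~\ref{thm_W4} into the repetition-number formula~\eqref{eq_r} and the block-count formula~\eqref{eq_b}, then simplify the denominators to $3!$ and $4!$. Incidentally, your computation also quietly corrects a typo in the paper's preceding display, which says ``substituting $k=3$'' while actually using $k=4$.
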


\medskip

A group divisible design with block size $5$ is proposed. To count the parameter $\lambda_5,$ the cardinalities $|\Omega_{z,4}|,$ $|\omega_{\alpha,4}|,$ and $|\tau_{\beta,4}|$ obtained in Lemma~\ref{lem_omega_tau_card} are used.
\begin{thm}     \label{thm_W5}
If $m \geq 5,$ then the triple $(X,W_2,W_5)$ is a $\text{GDD}(2^m-2,2,5)$ with balance parameter
$$\lambda_5=\frac{(2^m-8)(2^m-16)}{3!}.$$
\end{thm}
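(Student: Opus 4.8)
The plan is to follow exactly the template of the proof of Theorem~\ref{thm_W4}, raising every index by one. Fix two distinct elements $u,v\in X$ with $\{u,v\}\not\in W_2$, put $z=u+v$ and $S=\{u,v,u+1,v+1\}$, and let $\lambda_5(u,v)$ denote the number of blocks of $W_5$ containing both $u$ and $v$. By Lemmas~\ref{lem_W2} and~\ref{lem_W2_group} it suffices to show that $\lambda_5(u,v)$ equals the asserted value independently of $u,v$. Each block $B=\{u,v,a,b,c\}\in W_5$ through $u,v$ is sent to $\overline B=\{z,a,b,c\}$, i.e. $B\setminus\{u,v\}=\overline B\setminus\{z\}$; since $u+v+a+b+c=1$ gives $z+a+b+c=1$, the image has block sum $1$.

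The first real step is to check that $B\mapsto\overline B$ is a bijection from $\{B\in W_5:u,v\in B\}$ onto $\{\overline B\in\Omega_{z,4}:\overline B\cap S=\emptyset\}$. In the forward direction I would first note $z\not\in S$ (each equality $z=\alpha$ would force $u$ or $v$ into $\{0,1\}$) and $a,b,c\neq z$ (if, say, $a=z$ then the block sum forces $b+c=1$, contradicting $\binom{B}{2}\cap W_2=\emptyset$); together with Remark~\ref{rem_Wk} this yields $\overline B\in W_4$, hence $\overline B\in\Omega_{z,4}$. Moreover $\binom{B}{2}\cap W_2=\emptyset$ and $|B|=5$ force $a,b,c\notin S$, so $\overline B\cap S=\emptyset$. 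Conversely, given $\overline B\in\Omega_{z,4}$ with $\overline B\cap S=\emptyset$, the set $B=\{u,v,a,b,c\}$ has five distinct elements and block sum $1$; the only pairs that could sum to $1$ are handled by $\overline B\in W_4$ together with $a,b,c\notin\{u+1,v+1\}$, while the conditions $\binom{B}{3}\cap W_3=\binom{B}{4}\cap W_4=\emptyset$ are automatic from $|B|=5$ by Remark~\ref{rem_Wk}, so $B\in W_5$.

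With the bijection in hand, the count becomes an inclusion--exclusion inside $\Omega_{z,4}$: a block of $\Omega_{z,4}$ fails the condition $\overline B\cap S=\emptyset$ exactly when it lies in $\bigcup_{\alpha\in S}\omega_{\alpha,4}$, so $\lambda_5(u,v)=|\Omega_{z,4}|-\big|\bigcup_{\alpha\in S}\omega_{\alpha,4}\big|$. Here Lemma~\ref{lem_omega_tau}(ii) for $k=4$ is decisive: the four sets $\omega_{u,4},\omega_{v,4},\omega_{u+1,4},\omega_{v+1,4}$ are pairwise disjoint, so the union simply has cardinality $4\lambda_4$. (By Lemma~\ref{lem_omega_tau}(iii), $\omega_{u+1,4}=\tau_{v,4}$ and $\omega_{v+1,4}=\tau_{u,4}$, so two of these circles may equally be counted as $\tau$-sets; the value $|\tau_{\beta,4}|=\tfrac12(2^m-8)=\lambda_4$ from Lemma~\ref{lem_omega_tau_card} is then a useful cross-check, and the associated Venn diagram now carries four disjoint circles.) Substituting $|\Omega_{z,4}|=r_4$ and the values of $r_4$ and $\lambda_4$ from Corollary~\ref{cor_W4} and Theorem~\ref{thm_W4},
\[
\lambda_5(u,v)=r_4-4\lambda_4=\frac{(2^m-4)(2^m-8)}{3!}-4\cdot\frac{2^m-8}{2}=\frac{(2^m-8)(2^m-16)}{3!},
\]
which is independent of $u,v$, as required.

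I expect the main obstacle to be the bijection, and within it the reverse direction: one must be sure that imposing only $\overline B\cap S=\emptyset$ on a block of $\Omega_{z,4}$ suffices to recover a genuine $W_5$-block, which hinges on the fact (Remark~\ref{rem_Wk}) that for a $5$-set of sum $1$ the conditions $\binom{B}{3}\cap W_3=\emptyset$ and $\binom{B}{4}\cap W_4=\emptyset$ come for free and hence impose nothing new. The second delicate point is the disjointness of the four $\omega_{\alpha,4}$: it is exactly this that makes the inclusion--exclusion collapse to $4\lambda_4$ with no pairwise-intersection corrections, in contrast with the case $k=3$ in Theorem~\ref{thm_W4} where the four analogous sets coincided in pairs.
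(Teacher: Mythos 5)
Your proposal is correct and follows essentially the same route as the paper's proof: the correspondence $B\mapsto\overline B$ between blocks of $W_5$ through $u,v$ and blocks of $\Omega_{z,4}$ disjoint from $S$, followed by the count $\lambda_5(u,v)=|\Omega_{z,4}|-4|\omega_{u,4}|=r_4-4\lambda_4$ and the same arithmetic. You additionally verify the bijection in both directions, which the paper leaves implicit, and your citation of Lemma~\ref{lem_omega_tau}(ii) (pairwise disjointness of the four $\omega_{\alpha,4}$) is exactly the fact the count requires, whereas the paper cites part (iii), which only supplies the $\omega$--$\tau$ identifications labeling its Figure~2.
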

\begin{proof}
First, consider two distinct elements $u,v \in X$ with $\{u,v\}\not\in W_2.$ Let $\lambda_5(x,y)$ be the number of blocks in $W_5$ that contains both $u$ and $v.$ Owing to $z=u+v$ and $S=\{u,v,u+1,v+1\},$ the blocks sets $\Omega_{z,4},$ $\omega_{\alpha,4},$ and $\tau_{\alpha,4}$ for $\alpha\in S$ are known from Definition~\ref{defn_omega_tau}.

It is important to note that a block $B\in W_5$ that contain both $u$ and $v$ corresponds to a unique block $\overline{B} \in \Omega_{z,4}$ such that $B\setminus \{u,v\}=\overline{B}\setminus\{z\}.$ However, for each block $B \in W_5,$ $|B|=5$ implies $u,v \not\in \overline{B},$ and ${B \choose 2}\cap W_2=\phi$ implies $u+1,v+1 \not\in \overline{B}.$
As a consequence of Lemma~\ref{lem_omega_tau}~(iii) and the cardinalities $|\Omega_{z,4}|,$ $|\omega_{u,4}|$ obtained in Lemma~\ref{lem_omega_tau_card}, we have
$$\lambda_5(u,v)=|\Omega_{z,4}|-4|\omega_{u,4}|=r_4-4\lambda_4,$$
where the Venn diagram for $\Omega_{z,4}$ is depicted in Figure~2. In accordance with the values of $\lambda_4$ in Theorem~\ref{thm_W4} and $r_4$ in Corollary~\ref{cor_W4}, the parameter $\lambda_5$ can be further expressed as
$$\lambda_5(u,v)=\frac{(2^m-4)(2^m-8)}{3!}-4\frac{2^m-8}{2!}=\frac{(2^m-8)(2^m-16)}{3!},$$
which is independent of the choice of $u$ and $v.$ The proof of this theorem is completed.
\end{proof}

\begin{center}
\begin{tikzpicture}
\fill[mygray] (0,0) rectangle (6,5);
\fill[white] (1.8,1) circle (0.8)
             (4.2,1) circle (0.8)
             (1.8,3.4) circle (0.8)
             (4.2,3.4) circle (0.8);
\draw (1.8,1) circle (0.8) (1.8,1.8)  node [text=black,above] {$\omega_{u,4}=\tau_{v+1,4}$}
      (4.2,1) circle (0.8) (4.2,1.8)  node [text=black,above] {$\omega_{v,4}=\tau_{u+1,4}$}
      (1.8,3.4) circle (0.8) (1.8,4.2)  node [text=black,above] {$\omega_{u+1,4}=\tau_{v,4}$}
      (4.2,3.4) circle (0.8) (4.2,4.2)  node [text=black,above] {$\omega_{v+1,4}=\tau_{u,4}$}
      (0,0) rectangle (6,5) node [text=black,above] {$\Omega_{z,4}$};
\end{tikzpicture}\\
\bigskip
{\bf Figure 2:} The Venn diagram for the proof of Theorem~\ref{thm_W5}.
\end{center}

\medskip

Substituting $v=2^m-2,$ $n=2,$ and $k=5$ into~\eqref{eq_r} in Proposition~\ref{prop_r_const} and~\eqref{eq_b} in Proposition~\ref{prop_num_of_block}
leads to
$$r_5=\lambda_5\cdot\frac{(2^m-2)-2}{5-1}=\lambda_3\cdot\frac{2^m-4}{4}$$
and
$$b_5=r_5\cdot\frac{2^m-2}{5},$$
respectively.
Hence Corollary~\ref{cor_W5} follows.
\begin{cor}     \label{cor_W5}
The repetition number of the triple $(X,W_2,W_5)$ is
$$r_5=\frac{(2^m-4)(2^m-8)(2^m-16)}{4!}$$
and the number of blocks in $W_5$ is
$$|W_5|=b_5=\frac{(2^m-2)(2^m-4)(2^m-8)(2^m-16)}{5!}.$$
\qed
\end{cor}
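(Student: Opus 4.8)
The plan is to derive both formulas as immediate consequences of Theorem~\ref{thm_W5} together with the two counting identities established in Propositions~\ref{prop_r_const} and~\ref{prop_num_of_block}; no fresh combinatorial input is needed, so the whole argument reduces to a bookkeeping computation whose only mild subtlety is arranging the factorials correctly.

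First I would invoke Theorem~\ref{thm_W5}, which guarantees that $(X,W_2,W_5)$ is a $\text{GDD}(2^m-2,2,5)$ with balance parameter $\lambda_5=\frac{(2^m-8)(2^m-16)}{3!}$ for $m\geq 5.$ With the design parameters $v=2^m-2,$ $n=2,$ and $k=5$ now fixed, I would substitute into the repetition-number formula~\eqref{eq_r}, namely $r=\frac{\lambda(v-n)}{k-1},$ to obtain
$$r_5=\frac{\lambda_5\,(2^m-4)}{4}=\frac{(2^m-4)(2^m-8)(2^m-16)}{4\cdot 3!}.$$
The single algebraic point is that $4\cdot 3!=4!,$ which collapses the denominator and yields the claimed expression for $r_5.$

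Next I would feed this value of $r_5$ into the block-count formula~\eqref{eq_b}, $b=\frac{vr}{k},$ again with $v=2^m-2$ and $k=5,$ giving
$$|W_5|=b_5=\frac{(2^m-2)\,r_5}{5}=\frac{(2^m-2)(2^m-4)(2^m-8)(2^m-16)}{5\cdot 4!}.$$
Here the only thing to observe is the factorial identity $5\cdot 4!=5!,$ after which the stated formula for $b_5$ drops out.

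I do not anticipate any genuine obstacle: every ingredient has already been proved, and the computation is purely mechanical. The one place where a careless reader could slip is the reconciliation of the factorials---ensuring that $3!$ promotes to $4!$ in the first step and $4!$ to $5!$ in the second---but this is a matter of arithmetic rather than of mathematics, so I would simply record the two substitutions and the resulting simplifications and consider the corollary established.
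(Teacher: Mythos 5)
Your proposal is correct and follows exactly the paper's own route: substitute $v=2^m-2$, $n=2$, $k=5$ and the value of $\lambda_5$ from Theorem~\ref{thm_W5} into the formulas $r=\lambda(v-n)/(k-1)$ and $b=vr/k$ of Propositions~\ref{prop_r_const} and~\ref{prop_num_of_block}, then simplify the factorials. Nothing is missing; incidentally, your computation also makes clear that the paper's displayed intermediate expression ``$\lambda_3\cdot\frac{2^m-4}{4}$'' contains a typo and should read $\lambda_5\cdot\frac{2^m-4}{4}$.
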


\medskip

A group divisible design with block size $6$ is presented. The cardinalities $|\Omega_{z,5}|,$ $|\omega_{\alpha,5}|,$ and $|\tau_{\beta,5}|$ found in Lemma~\ref{lem_omega_tau_card} are applied to calculate the parameter $\lambda_6.$
\begin{thm}     \label{thm_W6}
If $m \geq 6,$ then the triple $(X,W_2,W_6)$ is a $\text{GDD}(2^m-2,2,6)$ with balance parameter
$$\lambda_6=\frac{(2^m-8)(2^m-16)(2^m-32)}{4!}.$$
\end{thm}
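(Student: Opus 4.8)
The plan is to mirror the proofs of Theorems~\ref{thm_W4} and~\ref{thm_W5}, reducing the count of size-$6$ blocks to a count inside $\Omega_{z,5}$. Fix distinct $u,v\in X$ with $\{u,v\}\notin W_2$, put $z=u+v$ and $S=\{u,v,u+1,v+1\}$, and let $\lambda_6(u,v)$ be the number of blocks of $W_6$ containing both $u$ and $v$. First I would set up the correspondence $B\mapsto\overline B$ with $B\setminus\{u,v\}=\overline B\setminus\{z\}$ and check it is a bijection from $\{B\in W_6:u,v\in B\}$ onto a subset of $\Omega_{z,5}$. As before one verifies $z\notin B$ (otherwise $\{u,v,z\}$ sums to $0$, so $B\setminus\{u,v,z\}$ is a proper subset of sum $1$, contradicting Remark~\ref{rem_Wk}); hence $\overline B$ has size $5$, sums to $1$, and inherits the defining property, giving $\overline B\in\Omega_{z,5}$.

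The crucial second step is to determine exactly which $\overline B\in\Omega_{z,5}$ arise, i.e. for which $\overline B$ the set $B=\{u,v\}\cup(\overline B\setminus\{z\})$ again lies in $W_6$. Writing $B'=\overline B\setminus\{z\}$ and testing every proper subset $T\subseteq B$ of size $\ge2$ for sum $1$, I split according to $|T\cap\{u,v\}|$. The cases $T\cap\{u,v\}=\emptyset$ and $\{u,v\}\subseteq T$ never give sum $1$ (each would force a forbidden proper subset of $\overline B$ of sum $1$), so the only obstructions are $T=\{u\}\cup T'$ with $\sum T'=u+1$ or $T=\{v\}\cup T'$ with $\sum T'=v+1$, together with distinctness $u,v\notin B'$. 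Examining $|T'|$: the case $|T'|=1$ excludes $\omega_{u+1,5}$ and $\omega_{v+1,5}$; the case $|T'|=2$ excludes $\tau_{u+1,5}$ and $\tau_{v+1,5}$; the case $|T'|=3$ forces the remaining element of $B'$ to equal $v$ (resp.\ $u$) and is therefore already absorbed into $\omega_{v,5}$ (resp.\ $\omega_{u,5}$) by distinctness; and $|T'|=4$ cannot occur since $\sum B'=1+z$. Thus the blocks to discard form
$$\bigcup_{\alpha\in S}\omega_{\alpha,5}\ \cup\ \tau_{u+1,5}\ \cup\ \tau_{v+1,5},$$
so $\lambda_6(u,v)=|\Omega_{z,5}|-\big|\bigcup_{\alpha\in S}\omega_{\alpha,5}\cup\tau_{u+1,5}\cup\tau_{v+1,5}\big|$.

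The third step simplifies this cardinality via Lemma~\ref{lem_omega_tau}. By part~(ii) the four sets $\omega_{\alpha,5}$ are pairwise disjoint; by part~(iii) (with $k=5\ge5$) each $\omega_{\alpha,5}$ is disjoint from each $\tau_{\beta,5}$; and by part~(iv) the sets $\tau_{u+1,5}=\tau_{v,5}$ and $\tau_{v+1,5}=\tau_{u,5}$ are distinct. It remains to see these two $\tau$-sets are disjoint, which is where the main work lies, since Lemma~\ref{lem_omega_tau}(iv) only asserts $\tau$-disjointness for $k\ge6$. I would argue directly that $\overline B\in\Omega_{z,5}$ cannot contain both a pair summing to $u+1$ and a pair summing to $v+1$: two such pairs are either disjoint, forcing the four non-$z$ elements to sum to $u+v=z$ and hence $\sum\overline B=0$, or share one element, forcing two of the four elements to sum to $1$ against $\overline B\in W_5$. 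With all six sets pairwise disjoint, a Venn diagram analogous to Figures~1 and~2 (now four $\omega$-circles and two $\tau$-circles inside $\Omega_{z,5}$) yields
$$\lambda_6(u,v)=|\Omega_{z,5}|-4|\omega_{u,5}|-2|\tau_{u+1,5}|=r_5-4\lambda_5-2|\tau_{\alpha,5}|.$$

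Finally I substitute $r_5$ from Corollary~\ref{cor_W5}, $\lambda_5$ from Theorem~\ref{thm_W5}, and $|\tau_{\alpha,5}|=\tfrac14(2^m-8)(2^m-16)$ from Lemma~\ref{lem_omega_tau_card}; a short computation collapses the expression to $\frac{(2^m-8)(2^m-16)(2^m-32)}{4!}$, independent of $u,v$, which establishes the balance condition and identifies $\lambda_6$. The main obstacle is the second step: pinning down that the discarded blocks are \emph{exactly} the six listed sets, in particular that the $|T'|=3$ obstruction collapses into the distinctness ($\omega$) sets so that no genuinely new ``$3$-subset-sum'' condition survives. This is precisely the phenomenon that will obstruct the same method for $k\ge8$, where a $|T'|\ge4$ obstruction would require a block set not expressible through the $\omega$'s and $\tau$'s, and explains why the construction halts at block size $7$.
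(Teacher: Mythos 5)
Your proposal is correct and follows essentially the same route as the paper: reduce to $\Omega_{z,5}$ via the correspondence $B\mapsto\overline B$, discard the four $\omega_{\alpha,5}$ and the two distinct $\tau$-sets, and compute $\lambda_6=r_5-4\lambda_5-2|\tau_{\alpha,5}|$ using Lemma~\ref{lem_omega_tau} and Lemma~\ref{lem_omega_tau_card}. In fact your third step is slightly more careful than the paper itself: the pairwise disjointness of $\tau_{u+1,5}$ and $\tau_{v+1,5}$ is only asserted in Lemma~\ref{lem_omega_tau}~(iv) for $k\geq 6$, and the paper's proof of Theorem~\ref{thm_W6} uses it implicitly (via the Venn diagram of Figure~3) without justification, whereas your direct argument (disjoint pairs force $\sum\overline B=0$; overlapping pairs force a $2$-subset of $\overline B$ with sum $1$) closes that gap.
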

\begin{proof}
Given two distinct elements $u,v \in X$ with $\{u,v\}\not\in W_2,$ denote the number of blocks in $W_6$ that contains both $u$ and $v$ by $\lambda_6(u,v).$  If $z=x+y$ and $S=\{u,v,u+1,v+1\},$ then the blocks sets $\Omega_{z,5},$ $\omega_{\alpha,5},$ and $\tau_{\alpha,5}$ for $\alpha\in S$ can be derived from Definition~\ref{defn_omega_tau}. A block $B\in W_6$ that contains both $x$ and $y$ corresponds to a unique block $\overline{B} \in \Omega_{z,5}$ such that $B\setminus \{u,v\}=\overline{B}\setminus\{z\}.$ For each block $B \in W_6,$ we have
\begin{itemize}
\item[(i)] $|B|=6$ implies $x,y\not\in\overline{B},$
\item[(ii)] ${B \choose 2}\cap W_2=\phi$ implies $u+1,v+1\not\in \overline{B},$ and
\item[(iii)] ${B \choose 3}\cap W_3=\phi$ implies $\not\exists a,b\in \overline{B}\setminus\{z\}$ such that $a+b\in\{u,v,u+1,v+1\}.$
\end{itemize}
Note that the third condition ${B \choose 3}\cap W_3=\phi$ is equivalent to that there do not exist three distinct elements in $B$ with sum in $\{0,1\}.$
Thus, by Lemma~\ref{lem_omega_tau}~(iii) and the cardinalities $|\Omega_{z,5}|,$ $|\omega_{u,5}|,$ and $|\tau_{u,5}|$ given in Lemma~\ref{lem_omega_tau_card}, we have
$$\lambda_6(u,v)=|\Omega_{z,5}|-4|\omega_{u,5}|-2|\tau_{u,5}|
=r_5-4\lambda_5-\frac{1}{2}(2^m-8)(2^m-16).$$
The Venn diagram can be seen in Figure~3. Furthermore, from the values of $\lambda_5$ and $r_5$ respectively given in Theorem~\ref{thm_W5} and Corollary~\ref{cor_W5},
$$\lambda_6(u,v)=\frac{(2^m-8)(2^m-16)(2^m-32)}{4!},$$
which is independent of the choice of $u$ and $v.$ The desired result is obtained.
\end{proof}

\begin{center}
\begin{tikzpicture}
\fill[mygray] (0,0) rectangle (8.5,5);
\fill[white] (1.6,1) circle (0.8)
             (4,1) circle (0.8)
             (6.9,1) circle (0.8)
             (1.6,3.4) circle (0.8)
             (4,3.4) circle (0.8)
             (6.9,3.4) circle (0.8);
\draw (1.6,1) circle (0.8) (1.6,1.8)  node [text=black,above] {$\omega_{u+1,5}$}
      (4,1) circle (0.8) (4,1.8)  node [text=black,above] {$\omega_{v+1,5}$}
      (1.6,3.4) circle (0.8) (1.6,4.2)  node [text=black,above] {$\omega_{u,5}$}
      (4,3.4) circle (0.8) (4,4.2)  node [text=black,above] {$\omega_{v,5}$}
      (6.9,3.4) circle (0.8) (6.9,4.2)  node [text=black,above] {$\tau_{u,5}=\tau_{v+1,5}$}
      (6.9,1) circle (0.8) (6.9,1.8)  node [text=black,above] {$\tau_{v,5}=\tau_{u+1,5}$}
      (0,0) rectangle (8.5,5) node [text=black,above] {$\Omega_{z,5}$};
\end{tikzpicture}\\
\bigskip
{\bf Figure 3:} The Venn diagram for the proof of Theorem~\ref{thm_W6}.
\end{center}

\medskip

Substituting $v=2^m-2,$ $n=2,$ and $k=6$ into~\eqref{eq_r} in Proposition~\ref{prop_r_const} and~\eqref{eq_b} in Proposition~\ref{prop_num_of_block} yields
$$r_6=\lambda_6\cdot\frac{(2^m-2)-2}{6-1}=\lambda_6\cdot\frac{2^m-4}{5}$$
and
$$b_6=r_6\cdot\frac{2^m-2}{6},$$
respectively.
Hence one has Corollary~\ref{cor_W6}.
\begin{cor}     \label{cor_W6}
The repetition number of the triple $(X,W_2,W_6)$ is
$$r_6=\frac{(2^m-4)(2^m-8)(2^m-16)(2^m-32)}{5!}$$
and the number of blocks in $W_6$ is
$$|W_6|=b_6=\frac{(2^m-2)(2^m-4)(2^m-8)(2^m-16)(2^m-32)}{6!}.$$
\qed
\end{cor}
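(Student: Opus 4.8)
The plan is to derive both formulas by direct substitution into the two counting propositions, using the value of the balance parameter already established in Theorem~\ref{thm_W6}. Since that theorem shows $(X,W_2,W_6)$ is a $\text{GDD}(2^m-2,2,6)$ with
$$\lambda_6=\frac{(2^m-8)(2^m-16)(2^m-32)}{4!},$$
I would first apply Proposition~\ref{prop_r_const}, i.e.\ formula~\eqref{eq_r}, with the parameters $v=2^m-2,$ $n=2,$ and $k=6.$ This gives
$$r_6=\frac{\lambda_6\bigl((2^m-2)-2\bigr)}{6-1}=\frac{\lambda_6(2^m-4)}{5}.$$

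Next I would substitute the explicit expression for $\lambda_6$ and simplify. The only arithmetic point in this step is that $5\cdot 4!=5!,$ which collapses the denominator into the claimed form
$$r_6=\frac{(2^m-4)(2^m-8)(2^m-16)(2^m-32)}{5!}.$$
Finally, to obtain $b_6,$ I would feed this value of $r_6$ into Proposition~\ref{prop_num_of_block}, namely formula~\eqref{eq_b} with $v=2^m-2$ and $k=6,$ so that $b_6=\frac{(2^m-2)\,r_6}{6};$ here the identity $6\cdot 5!=6!$ yields the stated value of $|W_6|=b_6.$

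I do not anticipate any genuine obstacle: the entire argument is a routine substitution, exactly parallel to the derivations of Corollaries~\ref{cor_W3},~\ref{cor_W4}, and~\ref{cor_W5}, and indeed the relevant substitutions into~\eqref{eq_r} and~\eqref{eq_b} have already been carried out in the paragraph preceding the statement. The only care needed is the bookkeeping of the factorial denominators—checking $5\cdot 4!=5!$ and $6\cdot 5!=6!$—and verifying that the chain of factors $(2^m-4),(2^m-8),(2^m-16),(2^m-32)$ is reproduced exactly, with the extra factor $(2^m-2)$ appearing only in $b_6.$ All of the substantive work, namely proving that $\lambda_6$ is constant and equal to the stated expression, was already completed in Theorem~\ref{thm_W6}, so the corollary is immediate and may be recorded with \qed.
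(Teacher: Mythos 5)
Your proposal is correct and matches the paper's own derivation exactly: the paper likewise substitutes $v=2^m-2,$ $n=2,$ $k=6$ into~\eqref{eq_r} and~\eqref{eq_b}, obtaining $r_6=\lambda_6\cdot\frac{2^m-4}{5}$ and $b_6=r_6\cdot\frac{2^m-2}{6},$ and then inserts the value of $\lambda_6$ from Theorem~\ref{thm_W6}. The factorial bookkeeping $5\cdot 4!=5!$ and $6\cdot 5!=6!$ is precisely the step the paper leaves implicit, so nothing is missing.
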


\medskip

The next theorem states a group divisible design with block size $7$. The cardinalities $|\Omega_{z,6}|,$ $|\omega_{\alpha,6}|,$ and $|\tau_{\beta,6}|$ found in Lemma~\ref{lem_omega_tau_card} play an important role in determining the parameter $\lambda_7.$
\begin{thm}     \label{thm_W7}
If $m \geq 7,$ then the triple $(X,W_2,W_7)$ is a $\text{GDD}(2^m-2,2,7)$ with balance parameter
$$\lambda_7=\frac{(2^m-8)(2^m-16)(2^m-32)(2^m-64)}{5!}.$$
\end{thm}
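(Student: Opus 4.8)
The plan is to run the same reduction used in Theorems~\ref{thm_W4}--\ref{thm_W6}, now passing from blocks of $W_7$ to blocks of $\Omega_{z,6}$. Fix two distinct $u,v\in X$ with $\{u,v\}\not\in W_2$, put $z=u+v$ and $S=\{u,v,u+1,v+1\}$, and let $\lambda_7(u,v)$ be the number of blocks of $W_7$ containing both $u$ and $v$. Every such block $B$ determines $\overline{B}=(B\setminus\{u,v\})\cup\{z\}$, and since the characteristic is $2$ one checks $\sum_{i\in\overline{B}}i=1$ and, using that $B$ has no $W_\ell$ subset for $2\le\ell\le 4$, that $\overline{B}\in\Omega_{z,6}$; the assignment $B\mapsto\overline{B}$ is injective with inverse $\overline{B}\mapsto(\overline{B}\setminus\{z\})\cup\{u,v\}$. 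Thus $\lambda_7(u,v)=|\Omega_{z,6}|-N$, where $N$ counts the $\overline{B}\in\Omega_{z,6}$ for which $B=(\overline{B}\setminus\{z\})\cup\{u,v\}$ fails to lie in $W_7$.

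Next I would classify these forbidden $\overline{B}$. By Definition~\ref{defn_Wk} and Remark~\ref{rem_Wk}, $B\in W_7$ fails precisely when some subset $T\subseteq B$ with $|T|\in\{2,3,4\}$ has $\sum_{i\in T}i=1$. I split on $|T\cap\{u,v\}|$. If $\{u,v\}\subseteq T$ then, exactly as in the proof of Theorem~\ref{thm_W6}, every case forces either $z=1$ or a $W_2$/$W_3$ subset of $\overline{B}$, all impossible. If $T$ avoids $\{u,v\}$ then $T\subseteq\overline{B}$, and the extended range in Remark~\ref{rem_Wk} forbids $\overline{B}$ from containing any sum-$1$ subset of size $2\le\ell\le 5$, so it excludes $T$. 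Hence every forbidden configuration meets $\{u,v\}$ in exactly one element, say $u$: a bad $T$ of size $2,3,4$ gives respectively $u+1\in\overline{B}$, a pair in $\overline{B}\setminus\{z\}$ summing to $u+1$, or a triple in $\overline{B}\setminus\{z\}$ summing to $u+1$, i.e. membership in $\omega_{u+1,6}$, in $\tau_{u+1,6}$, or in a new ``triple-sum'' set; together with $|B|=7$ forcing $u,v\notin\overline{B}$, that is $\overline{B}\in\omega_{u,6}\cup\omega_{v,6}$.

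The decisive observation, and the feature absent for smaller block sizes, is that $\overline{B}\setminus\{z\}$ has exactly five elements, so a triple summing to $u+1$ is complementary to a pair summing to $(u+1)+z+1=v$. Consequently the size-$4$ obstructions on the $u$-side are exactly $\tau_{v,6}$, and symmetrically those on the $v$-side are $\tau_{u,6}$. Therefore the forbidden set is precisely $\bigcup_{\alpha\in S}\omega_{\alpha,6}\cup\bigcup_{\alpha\in S}\tau_{\alpha,6}$. Invoking Lemma~\ref{lem_omega_tau} at $k=6$, the four sets $\omega_{\alpha,6}$ are pairwise disjoint by~(ii), the four sets $\tau_{\alpha,6}$ are pairwise disjoint by~(iv), and $\omega_{\alpha,6}\cap\tau_{\beta,6}=\phi$ by~(iii); so no inclusion--exclusion corrections arise and
$$\lambda_7(u,v)=|\Omega_{z,6}|-4|\omega_{u,6}|-4|\tau_{u,6}|=r_6-4\lambda_6-4|\tau_{u,6}|.$$
Substituting $r_6$ from Corollary~\ref{cor_W6}, $\lambda_6$ from Theorem~\ref{thm_W6}, and $|\tau_{u,6}|=\frac{1}{12}(2^m-8)(2^m-16)(2^m-32)$ from Lemma~\ref{lem_omega_tau_card}, the common factor $(2^m-8)(2^m-16)(2^m-32)$ pulls out and the remaining bracket collapses via $2^m-4-20-40=2^m-64$ to the claimed $\lambda_7=\frac{(2^m-8)(2^m-16)(2^m-32)(2^m-64)}{5!}$, which is independent of $u$ and $v$.

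I expect the main obstacle to be the third paragraph: correctly recognizing that the genuinely new size-$4$ obstructions are not a separate quantity but coincide with the previously counted $\tau_{u,6}$ and $\tau_{v,6}$ through the complementary-pair identity, and simultaneously confirming that $k=6$ is the first block size at which all four $\tau_{\alpha,6}$ become distinct and mutually disjoint (Lemma~\ref{lem_omega_tau}(iv) requires $k\ge 6$), so that the subtraction stays a clean linear combination rather than a genuine inclusion--exclusion.
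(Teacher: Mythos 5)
Your proof is correct and follows essentially the same route as the paper's: the same correspondence $B\mapsto\overline{B}$ into $\Omega_{z,6}$, the same identification of the excluded blocks as the eight pairwise-disjoint sets $\omega_{\alpha,6},\tau_{\alpha,6}$ ($\alpha\in S$) via Lemma~\ref{lem_omega_tau}, and the same count $\lambda_7(u,v)=|\Omega_{z,6}|-4|\omega_{u,6}|-4|\tau_{u,6}|=r_6-4\lambda_6-\frac{1}{3}(2^m-8)(2^m-16)(2^m-32)$. The only difference is one of exposition: you make explicit the complementation identity (a triple in $\overline{B}\setminus\{z\}$ summing to $u+1$ is complementary to a pair summing to $v$, since $\overline{B}\setminus\{z\}$ has five elements with sum $1+z$) that the paper's condition (iv) invokes without comment.
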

\begin{proof}
Given two distinct elements $u,v \in X$ with $\{u,v\}\not\in W_2,$ let $\lambda_7(u,v)$ be the number of blocks in $W_7$ that contains both $u$ and $v.$ Let $z=u+v$ and $S=\{u,v,u+1,v+1\}.$ A block $B\in W_7$ that contains both $x$ and $y$ corresponds to a unique block $\overline{B} \in \Omega_{z,6}$ such that $B\setminus \{u,v\}=\overline{B}\setminus\{z\},$ where the blocks set $\Omega_{z,6}$ is defined in Definition~\ref{defn_omega_tau} and depicted in Figure~4. For each block $B \in W_6,$ we have
\begin{itemize}
\item[(i)] $|B|=6$ implies $u,v\not\in\overline{B},$
\item[(ii)] ${B \choose 2}\cap W_2=\phi$ implies $u+1,v+1\not\in \overline{B},$
\item[(iii)] ${B \choose 3}\cap W_3=\phi$ implies $\not\exists a,b\in \overline{B}\setminus\{z\}$ such that $a+b\in\{u+1,v+1\},$ and
\item[(iv)] ${B \choose 4}\cap W_4=\phi$ implies $\not\exists a,b\in \overline{B}\setminus\{z\}$ such that $a+b\in\{u,v\}.$
\end{itemize}
Using $\omega_{\alpha,6}$ and $\tau_{\alpha,6}$ for $\alpha\in S=\{u,v,u+1,v+1\}$ in Definition~\ref{defn_omega_tau} and combining all results in Lemma~\ref{lem_omega_tau}~(iv), the cardinalities $|\Omega_{z,6}|,$ $|\omega_{u,6}|$ and $|\tau_{u,6}|$ in Lemma~\ref{lem_omega_tau_card}, the value of $\lambda_6$ in Theorem~\ref{thm_W6}, and the amount of $r_6$ in Corollary~\ref{cor_W6}, the parameter $\lambda_7$ finally becomes
\begin{eqnarray}
\lambda_7(u,v)&=&|\Omega_{z,6}|-4|\omega_{u,6}|-4|\tau_{u,6}|\nonumber\\
&=&r_6-4\lambda_6-\frac{1}{3}(2^m-8)(2^m-16)(2^m-32)\nonumber\\
&=&\frac{(2^m-8)(2^m-16)(2^m-32)(2^m-64)}{5!},\nonumber
\end{eqnarray}
which is independent of the choice of $u$ and $v.$ This completes the proof.
\end{proof}

\begin{center}
\begin{tikzpicture}
\fill[mygray] (0,0) rectangle (11,5);
\fill[white] (1.6,1) circle (0.8)
             (4,1) circle (0.8)
             (7,1) circle (0.8)
             (1.6,3.4) circle (0.8)
             (4,3.4) circle (0.8)
             (7,3.4) circle (0.8)
             (9.4,1) circle (0.8)
             (9.4,3.4) circle (0.8);
\draw (1.6,1) circle (0.8) (1.6,1.8)  node [text=black,above] {$\omega_{u+1,6}$}
      (4,1) circle (0.8) (4,1.8)  node [text=black,above] {$\omega_{v+1,6}$}
      (1.6,3.4) circle (0.8) (1.6,4.2)  node [text=black,above] {$\omega_{u,6}$}
      (4,3.4) circle (0.8) (4,4.2)  node [text=black,above] {$\omega_{v,6}$}
      (7,3.4) circle (0.8) (7,4.2)  node [text=black,above] {$\tau_{u,6}$}
      (7,1) circle (0.8) (7,1.8)  node [text=black,above] {$\tau_{u+1,6}$}
      (9.4,3.4) circle (0.8) (9.4,4.2)  node [text=black,above] {$\tau_{v,6}$}
      (9.4,1) circle (0.8) (9.4,1.8)  node [text=black,above] {$\tau_{v+1,6}$}
      (0,0) rectangle (11,5) node [text=black,above] {$\Omega_{z,6}$};
\end{tikzpicture}\\
\bigskip
{\bf Figure 4:} The Venn diagram for the proof of Theorem~\ref{thm_W7}.
\end{center}

\medskip

Corollary~\ref{cor_W7} is easily carried out according to the formulas $r=\lambda(v-n)/(k-1)$ in Proposition~\ref{prop_r_const} and $b=vr/k$ in Proposition~\ref{prop_num_of_block}.
\begin{cor}     \label{cor_W7}
The repetition number of the triple $(X,W_2,W_7)$ is
$$r_7=\frac{(2^m-4)(2^m-8)(2^m-16)(2^m-32)(2^m-64)}{6!}$$
and the number of blocks in $W_7$ is
$$|W_7|=b_7=\frac{(2^m-2)(2^m-4)(2^m-8)(2^m-16)(2^m-32)(2^m-64)}{7!}.$$
\qed
\end{cor}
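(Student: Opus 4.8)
The plan is to treat this corollary as a purely arithmetic consequence of the preceding theorem, since Theorem~\ref{thm_W7} already certifies that $(X,W_2,W_7)$ is a $\text{GDD}(2^m-2,2,7)$ with balance parameter $\lambda_7=\frac{(2^m-8)(2^m-16)(2^m-32)(2^m-64)}{5!}$. Granting that, I may invoke Proposition~\ref{prop_r_const} and Proposition~\ref{prop_num_of_block} directly, as both hold for every GDD and require nothing beyond the axioms in Definition~\ref{defn_GDD}. The entire proof is then two substitutions followed by a factorial simplification.

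First I would compute $r_7$. Setting $v=2^m-2$, $n=2$, and $k=7$ in the identity $r=\lambda(v-n)/(k-1)$ of Proposition~\ref{prop_r_const} gives
$$r_7=\frac{\lambda_7\big((2^m-2)-2\big)}{7-1}=\frac{2^m-4}{6}\cdot\frac{(2^m-8)(2^m-16)(2^m-32)(2^m-64)}{5!}.$$
The only bookkeeping step is to recognize that $6\cdot 5!=6!$, which collapses the denominator and yields the claimed
$$r_7=\frac{(2^m-4)(2^m-8)(2^m-16)(2^m-32)(2^m-64)}{6!}.$$

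Next I would compute $b_7=|W_7|$. Feeding $v=2^m-2$, $k=7$, and the value of $r_7$ just obtained into the identity $b=vr/k$ of Proposition~\ref{prop_num_of_block} produces
$$b_7=\frac{2^m-2}{7}\cdot\frac{(2^m-4)(2^m-8)(2^m-16)(2^m-32)(2^m-64)}{6!},$$
and using $7\cdot 6!=7!$ gives the stated expression for $b_7$. The main (and indeed the only) obstacle here is the factorial accounting just described; there is no combinatorial content left to verify, since both the correctness of the design and the value of $\lambda_7$ are inherited from Theorem~\ref{thm_W7}, and the two counting propositions are valid for every GDD. I would close by noting that this same substitution pattern—multiplying $\lambda_k$ by $\frac{2^m-4}{k-1}$ to obtain $r_k$ and then by $\frac{2^m-2}{k}$ to obtain $b_k$—is exactly what produced Corollaries~\ref{cor_W3} through~\ref{cor_W6}, so no new idea is needed beyond carrying it out one step further.
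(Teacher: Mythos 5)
Your proposal is correct and follows exactly the paper's own route: the paper likewise derives Corollary~\ref{cor_W7} by substituting $v=2^m-2$, $n=2$, $k=7$, and the value of $\lambda_7$ from Theorem~\ref{thm_W7} into the formulas $r=\lambda(v-n)/(k-1)$ and $b=vr/k$ of Propositions~\ref{prop_r_const} and~\ref{prop_num_of_block}. Your factorial bookkeeping ($6\cdot 5!=6!$ and $7\cdot 6!=7!$) matches the intended simplification, so nothing is missing.
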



\section{Concluding remark}\label{sec_con}

This paper has demonstrated that the triple $(X,W_{2},W_{k})$ is a $\text{GDD}(2^{m}-2,2,k)$ for $k=3,4,5,6,7$ and $m \geq k.$ The balance parameter $\lambda_k,$ repetition number $r_k$ and number of blocks $b_k$ of each GDD are shown in Table 1.
\renewcommand{\arraystretch}{2.5}
\begin{center}
{\bf Table 1:} The balance parameter $\lambda_k,$ repetition number $r_k$ and number of blocks $b_k$ of triple $(X,W_{2},W_{k})$ for $k=3,4,5,6,$ and $7,$ respectively.

\bigskip
\begin{tabular}{|c|c|c|c|}
\hline
      & $\lambda_{k}$ & $r_{k}$ &  $b_{k}$ \\  \hline
$k=3$ & $1$     & $\frac{2^{m}-4}{2!}$  & $\frac{(2^{m}-2)(2^{m}-4)}{3!}$ \\ \hline
$k=4$ & $\frac{2^{m}-8}{2!}$ & $\frac{(2^{m}-4)(2^{m}-8)}{3!}$ & $\frac{(2^{m}-2)(2^{m}-4)(2^{m}-8)}{4!}$ \\ \hline
$k=5$ & $\frac{(2^{m}-8)(2^{m}-16)}{3!}$ & $\frac{(2^{m}-4)(2^{m}-8)(2^{m}-16)}{4!}$ & $\frac{(2^{m}-2)(2^{m}-4)(2^{m}-8)(2^{m}-16)}{5!}$ \\ \hline
$k=6$ & $\frac{\prod\limits_{i=3}^{5} (2^m-2^i)}{4!}$ & $\frac{\prod\limits_{i=2}^{5} (2^m-2^i)}{5!}$ & $\frac{\prod\limits_{i=1}^{5} (2^m-2^i)}{6!}$ \\ \hline
$k=7$ & $\frac{\prod\limits_{i=3}^{6} (2^m-2^i)}{5!}$ & $\frac{\prod\limits_{i=2}^{6} (2^m-2^i)}{6!}$ & $\frac{\prod\limits_{i=1}^{6} (2^m-2^i)}{7!}$
\\ \hline
\end{tabular}\\
\end{center}

\medskip

By observing the above table, we give a conjecture that the triple $(X,W_{2},W_{k})$ is a GDD for all $m \geq k \geq 3$ including the exact values for the parameters $\lambda_{k},$ $r_{k},$ and $b_{k}.$
\begin{conj} \label{conj_con_rem}
For $m \geq k \geq 3,$ let
$$W_k=\left\{\begin{array}{c|c}
 \{x_1,x_2,\ldots,x_k\}\subset X  & \sum\limits_{i=1}^{k}x_i=1, \text{ and }\sum\limits_{i\in I}x_i\neq 1\text{ for each } \\
   & \text{ nonempty proper subset } I\subset\{1,2,\ldots,k\}
\end{array}\right\}.$$
Then, the triple $(X,W_{2},W_{k})$ is a $\text{GDD}(2^{m}-2,2,k)$ with
\begin{eqnarray}
\text{balance parameter}~~~~~~\lambda_{k}&=&\frac{\prod\limits_{i=3}^{k-1} (2^m-2^i)}{(k-2)!},
\nonumber \\
\text{repetition number}~~~~~~r_{k}&=&\frac{\prod\limits_{i=2}^{k-1} (2^m-2^i)}{(k-1)!},~~~\text{and}
\nonumber \\
\text{number of blocks}~~~~~~b_{k}&=&\frac{\prod\limits_{i=1}^{k-1} (2^m-2^i)}{k!}.
\nonumber
\end{eqnarray}
\end{conj}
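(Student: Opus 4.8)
The plan is to recast the purely combinatorial definition of $W_k$ as a single linear-algebraic condition and then count; once that is done, every assertion becomes a short dimension count.

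\textbf{Step 1 (Reformulation).} I would first prove, by induction on $k$ and with Remark~\ref{rem_Wk} in hand, that $B\in W_k$ if and only if $|B|=k$, $\sum_{i\in B}i=1$, and the elements of $B$ are linearly independent over $\mathbb{F}_2$ (viewing $\mathbb{F}_{2^m}$ as an $m$-dimensional $\mathbb{F}_2$-space). The bridge is the characteristic-two complementation identity: since $\sum_{i\in B}i=1$, a proper nonempty $I\subset B$ has $\sum_{i\in I}i=1$ exactly when $\sum_{i\in B\setminus I}i=0$. Hence ``no proper nonempty subset of $B$ sums to $1$'' is equivalent to ``no nonempty subset of $B$ sums to $0$,'' which is precisely $\mathbb{F}_2$-linear independence (a minimal subset summing to $1$ cannot be a singleton since $1\notin X$, which lets the induction close). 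As a bonus, independence forces $0\notin B$, and if $1\in B$ then $B\setminus\{1\}$ would be a nonempty set summing to $0$; so every such $B$ lies in $X=\mathbb{F}_{2^m}\setminus\{0,1\}$ automatically, matching Definition~\ref{defn_Wk}.

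\textbf{Step 2 (GDD axioms (i) and (ii)).} Condition~(i) is exactly Lemma~\ref{lem_W2}. For condition~(ii), any two elements of a block $B\in W_k$ are two members of an $\mathbb{F}_2$-independent set, so their sum is never $1$; thus $B$ contains no full group $\{a,a+1\}$, which is the content of Lemma~\ref{lem_W2_group}.

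\textbf{Step 3 (Balance parameter).} Fix distinct $u,v\in X$ from different groups, i.e.\ $\{u,v\}\notin W_2$, which by Step~1 means $u+v\neq 1$; since also $u,v\neq 1$ and $u\neq v$, we get $1\notin\mathrm{span}_{\mathbb{F}_2}(u,v)=\{0,u,v,u+v\}$, so $\{1,u,v\}$ is linearly independent. I would count the blocks $B\in W_k$ containing $u,v$ via ordered completions: writing $B=\{u,v,x_3,\dots,x_k\}$, the sum condition forces $x_k=1+u+v+x_3+\cdots+x_{k-1}$, and one checks that the full set is independent with sum $1$ precisely when $\{u,v,x_3,\dots,x_{k-1}\}$ is independent and $1\notin\mathrm{span}_{\mathbb{F}_2}(u,v,x_3,\dots,x_{k-1})$ — equivalently, when $\{1,u,v,x_3,\dots,x_{k-1}\}$ is linearly independent. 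Hence choosing $(x_3,\dots,x_{k-1})$ amounts to extending the independent triple $\{1,u,v\}$ by $k-3$ further independent vectors, giving
$$\prod_{i=3}^{k-1}\bigl(2^m-2^{i}\bigr)$$
ordered tuples (when the vector $x_i$ is adjoined the previously fixed independent set has $i$ elements, so $x_i$ avoids a subspace of size $2^{i}$). Dividing by the $(k-2)!$ orderings of $\{x_3,\dots,x_k\}$ gives $\lambda_k=\tfrac{1}{(k-2)!}\prod_{i=3}^{k-1}(2^m-2^i)$, which is manifestly independent of the chosen pair; this simultaneously establishes axiom~(iii) and the claimed value of $\lambda_k$.

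\textbf{Step 4 (Remaining parameters) and the main difficulty.} With $\lambda_k$ in hand, $r_k$ and $b_k$ follow from Proposition~\ref{prop_r_const} and Proposition~\ref{prop_num_of_block} by substituting $v=2^m-2$, $n=2$, and absorbing the factor $2^m-4=2^m-2^{2}$ into the product to get $r_k=\tfrac{1}{(k-1)!}\prod_{i=2}^{k-1}(2^m-2^i)$, then the factor $2^m-2=2^m-2^{1}$ to get $b_k=\tfrac{1}{k!}\prod_{i=1}^{k-1}(2^m-2^i)$. I expect the main obstacle to be Step~1: everything downstream is routine once the equivalence with $\mathbb{F}_2$-independence is secured, but establishing it rigorously for all $k$ requires care with the recursive/forbidden-subset formulation (tracking the extension of the forbidden range to $2\le\ell\le k-1$ from Remark~\ref{rem_Wk}), with the char-two complement trick and the minimal-subset argument, and with the special role of the vector $1$ — it is exactly the condition $1\notin\mathrm{span}_{\mathbb{F}_2}(u,v)$ that both encodes ``different groups'' and makes the completion count in Step~3 uniform.
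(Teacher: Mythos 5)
Your proposal is correct, and it does more than take a different route from the paper --- the paper in fact contains no proof of this statement: it is left as a conjecture, verified only for $k=3,4,5,6,7$ (Theorems~\ref{thm_W3}, \ref{thm_W4}, \ref{thm_W5}, \ref{thm_W6}, \ref{thm_W7}) by a case-by-case inclusion--exclusion over the sets $\Omega_{z,k-1}$, $\omega_{\alpha,k-1}$, $\tau_{\alpha,k-1}$ of Definition~\ref{defn_omega_tau}, whose disjointness relations (Lemma~\ref{lem_omega_tau}) and cardinalities (Lemma~\ref{lem_omega_tau_card}) must be re-derived separately for each block size; this is exactly why the paper stops at $k=7$ and remarks that larger $k$ may require ``other counting methods.'' Your reformulation is such a method and, once the routine verifications are written out, it settles the conjecture uniformly for all $m\geq k\geq 3$. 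Its two pillars are sound: (a) by the characteristic-two complementation trick, $B\in W_k$ if and only if $\sum_{i\in B}i=1$ and $B$ is $\mathbb{F}_2$-linearly independent (a nonempty proper $I\subset B$ sums to $1$ iff $B\setminus I$ sums to $0$, while $B$ itself sums to $1\neq0$), and this automatically forces $B\subset X$; and (b) for $u,v$ in different groups --- equivalently, with $\{1,u,v\}$ independent --- the blocks of $W_k$ through $u,v$ are hit $(k-2)!$-to-one by the ordered tuples $(x_3,\dots,x_{k-1})$ extending $\{1,u,v\}$ to an independent set, because $x_k=1+u+v+x_3+\cdots+x_{k-1}$ is then determined and lies outside $\mathrm{span}_{\mathbb{F}_2}(u,v,x_3,\dots,x_{k-1})$ precisely when $1$ does, and every ordering of $B\setminus\{u,v\}$ arises exactly once; this yields $\lambda_k=\prod_{i=3}^{k-1}(2^m-2^i)/(k-2)!$ independently of the pair, after which $r_k$ and $b_k$ follow from Propositions~\ref{prop_r_const} and~\ref{prop_num_of_block} exactly as in Corollaries~\ref{cor_W3}--\ref{cor_W7}. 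What your method buys is a single count valid for every $k\leq m$, where the paper's Venn-diagram bookkeeping (Figures~1--4) visibly fails to scale; what the paper's method buys is that it works directly from the recursive Definition~\ref{defn_Wk} with no reformulation. One wording slip to repair in your Step~2: linear independence alone does not prevent two elements of a block from summing to $1$ (the pair $\{x,x+1\}$ is independent for $x\in X$); you also need the block sum --- if $a+b=1$ with $a,b\in B$ and $k\geq3$, then $B\setminus\{a,b\}$ is nonempty and sums to $0$, contradicting independence --- or, more simply, $\{a,b\}$ would be a proper nonempty subset summing to $1$, which the defining condition of $W_k$ forbids outright.
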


\medskip

The cases $k\leq 7$ of Conjecture~\ref{conj_con_rem} has been proved in this paper by using the including-excluding principle. Due to the complication for larger $k,$ the key behind the proof might contain other counting methods.



\section*{Acknowledgments}
This research is supported by the Ministry of Science and Technology of Taiwan R.O.C. under the project MOST 103-2632-M-214-001-MY3-2 including its subproject MOST 104-2811-M-214-001.

\bigskip

\noindent Chia-an Liu \hfil\break
Department of Financial and Computational Mathematics \hfil\break
I-Shou University \hfil\break
No.1 Sec.1 Xuecheng Rd. Dashu Dist. \hfil\break
Kaohsiung, Taiwan 84001 R.O.C. \hfil\break
Email: {\tt liuchiaan8@gmail.com} \hfil\break
Ext: +886-7-6577711-5612 \hfil\break


\begin{thebibliography}{8}
\bibitem{a:90}
I. Anderon, \emph{Combinatorial Designs: Construction Methods,} Ellis Horwood, 1990.

\bibitem{am:69}
E.F. Assmus, Jr., H.F. Mattson, Jr., New 5-designs, \emph{J. Combin. Theory} 6 (1969) 122-151.

\bibitem{am:74} E.F. Assmus, Jr., H.F. Mattson, Jr., Coding and combinatorics, \emph{SIAM Rev.} 16 (1974) 349-388.

\bibitem{bkow:14} M. Braun, A. Kohnert, P.R.J. \"{O}sterg\r{a}rd, A. Wassermann, Large sets of $t$-designs over finite fields, \emph{J. Combin. Theory Ser. A} 124 (2014) 195-202.

\bibitem{bg:04} C. Bachoc, P. Gaborit, Deisngs and self-dual codes with long shadows, \emph{J. Combin. Theory Ser. A} 5 (2004) 15-34.

\bibitem{cd:07} C.J. Colbourn, J.H. Dinitz, \emph{Handbook of combinatorial designs,} Taylor \& Francis Group LLC 2007.

\bibitem{cgl:08} Y.M. Chee, G. Ge, A.C.H. Ling, Group divisible codes and their application in the construction of optimal constant-composition codes of weight three, \emph{IEEE Trans. Inf. Theory} 54 (2008) 3552-3564.

\bibitem{cl:07} Y.M. Chee, S. Ling, Constructions of $q$-ary constant-weight codes, \emph{IEEE Trans. Inf. Theory} 53 (2007) 135-146.

\bibitem{clls:07} Y.M. Chee, A.C.H. Ling, S. Ling, H. Shen, The PBD-closure of constant-composition codes, \emph{IEEE Trans. Inf. Theory} 53 (2007) 2685-2692.

\bibitem{ctrcl:03}
Y. Chang, T.K. Truong, I.S. Reed, H.Y. Cheng, C.D. Lee, Algebraic decoding of $(71,36,11),$ $(79,40,15),$ and $(97,49,15)$ quadratic residue codes, \emph{IEEE Trans. on Communications,} 51 (2003) 1463-1473.

\bibitem{flv:14}
A. Fazeli, S. Lovett , A. Vardy, Nontrivial $t$-designs over finite fields exist for all $t,$ \emph{J. Combin. Theory Ser. A} 127 (2014) 149-160.

\bibitem{fr:98}
H.L. Fu, C.A. Rodger, Group divisible designs with two associate classes: $n=2$ or $m=2,$ \emph{J. Combin. Theory Ser. A} 83 (1998) 94-117.

\bibitem{hs:04}
S.P. Hurd, D.G. Sarvate, Odd and even group divisible designs with two groups and block size four, \emph{Discrete Math.} 284 (2004) 189-196.

\bibitem{kp:94} G.T. Kenndy, V. Pless, On designs and formally self-dual codes, \emph{Des. Codes Crypto.} 4 (1994) 43-55.


\bibitem{p:72} V. Pless, Symmetry codes over GF(3) and new five-designs, \emph{J. Combin. Theory Ser. A} 12 (1972) 119-142.

\bibitem{p:98}
V. Pless, \emph{Introduction to the Theory of Error-Correction Codes 3rd ed.,} John Wiley \& Sons Inc. 1998.

\bibitem{pml:87} V. Pless, J.M. Masley, J.S. Leon, On weights in Duadic codes, \emph{J. Conbin. Theory Ser. A} 44 (1987) 6-21.

\bibitem{ss:98}
D.G. Sarvate, J. Seberry, Group divisible designs, GBRSDS, and generalized weighing matrices, \emph{Utilitas Mathematica} 54 (1998) 157-174.

\bibitem{s:04}
D.R. Stinson, \emph{Combinatorial designs: constructions and analysis,} Springer-Verlag New York 2004.

\bibitem{t:15}
P.J. Tsai, \emph{A construction of group divisible designs,} M.D. Thesis of NUK (Taiwan) 2015.

\bibitem{w:09}
Z.X. Wan, \emph{Design Theory,} Higher Education 2009.

\bibitem{w:12}
Z.X. Wan, \emph{Finite Field and Galois Rings,} World Scientific 2012.

\bibitem{wc:15} L. Wang, Y. Chang, Combinatorial constructions of optimal three-dimensional optical orthogonal codes, \emph{IEEE Trans. Inf. Theory} 61 (2015) 671-687.

\bibitem{wy:10} J. Wang, J. Yin, Two-dimensional optimal othorogonal codes and semicyclic group divisible designs, \emph{IEEE Trans. Inf. Theory} 56 (2010) 2177-2187.

\end{thebibliography}
\end{document}